\newcommand{\N}{\mathbb{N}}
\newcommand{\R}{\mathbb{R}}
\newcommand{\s}{\mathbb{S}}
\newcommand{\e}{\mathbb{E}^3}
\newtheorem{theorem}{Theorem}[section]
\newtheorem{lemma}{Lemma}[section]
\newtheorem{definition}{Definition}[section]
\newtheorem{remark}{Remark}[section]
\newtheorem{example}{Example}[section]
\newcommand{\p}{\partial}
\newcommand{\bb}{\begin{equation}}
\newcommand{\ee}{\end{equation}}
\newcommand{\ba}{\begin{array}}
\newcommand{\ea}{\end{array}}
\newcommand{\f}{\frac}
\newcommand{\ds}{\displaystyle}
\newcommand{\al}{\alpha}
\newcommand{\be}{\beta}
\newcommand{\M}{\cal M}
\newcommand{\Span}{\text{Span}}
\newcommand{\C}{C_{\text{per}}}
\newcommand{\Gr}{\text{Gr}}
\numberwithin{equation}{section}
\title{Existence and uniqueness of periodic pseudospherical surfaces emanating from Cauchy problems}
\author[1]{Nilay Duruk Mutlubas}  \author[2,3]{Igor Leite Freire}
\affil[1]{Faculty of Engineering and Natural Sciences, Sabanci University, Turkey
\texttt{nilay.duruk@sabanciuniv.edu}}
\affil[2]{Department of Mathematical Sciences,
Loughborough University\\
 LE11 3TU Epinal Way, Loughborough, United Kingdom\\
\texttt{I.Leite-Freire@lboro.ac.uk}
}
\affil[3]{Departamento de Matemática, Universidade Federal de São Carlos\\
Rodovia Washington Luís, Km 235, 13565-905\\
São Carlos-SP, Brasil\\
  \texttt{igor.freire@ufscar.br} \\
  \texttt{igor.leite.freire@gmail.com}}
\begin{document}
\maketitle
\begin{abstract}
\centering\begin{minipage}{\dimexpr\paperwidth-9cm}
\textbf{Abstract:} We study implications and consequences of well-posed solutions of Cauchy problems of a Novikov equation describing pseudospherical surfaces. We show that if the co-frame of dual one-forms satisfies certain conditions for a given periodic initial datum, then there exists exactly two families of periodic one-forms satisfying the structural equations for a surface. Each pair then defines a metric of constant Gaussian curvature and a corresponding Levi-Civita connection form. We prove the existence of universal connection forms giving rise to second fundamental forms compatible with the metric. The main tool to prove our geometrical results is the Kato's semi-group approach, which is used to establish well-posedness of solutions of the Cauchy problem involved and ensure $C^1$ regularity for the first fundamental form and the Levi-Civita connection form.

\vspace{0.1cm}
\textbf{2020 AMS Mathematics Classification numbers}: 35B10, 53A05, 58J60, 35A30.

\textbf{Keywords:} Equations describing pseudospherical surfaces; First fundamental form; Second fundamental form; Cauchy problems; Kato's approach

\end{minipage}
\end{abstract}
\bigskip
\newpage
\tableofcontents
\newpage

\section{Introduction}\label{sec1}

In \cite{sasaki} Sasaki made a remarkable observation, showing that solutions of integrable equations solved by the AKNS $2\times2$ method \cite{akns} give rise to metrics of pseudospherical surfaces with Gaussian curvature ${\cal K}=-1$, see \cite[section 2]{sasaki} and \cite[section 1]{chern} for further details.

Later on, Chern and Tenenblat, in their seminal paper \cite{chern}, introduced the notion of equations describing pseudospherical surfaces (PSS equation) and gave a systematic way for finding them.

The works by Sasaki \cite{sasaki} and Chern and Tenenblat \cite{chern} showed a deep connection between integrability and differential geometry of pseudospherical surfaces, unsurprisingly, leading to a new notion of integrability, see \cite[Definition 2]{reyes2000} and \cite[page 245]{reyes2006-sel}.

Notwithstanding its relevance in terms of integrability, the work by Chern and Tenenblat made an in-depth investigation on certain very peculiar equations having the following property: with some exceptions (that we will discuss later), their solutions give rise to metrics with constant Gaussian curvature. This fact {\it per se} has been known for a long time for the Sine-Gordon equation, see \cite[Section 1]{rogers}, but its systematic study, implications and applications to other equations, potential links with integrable systems and construction of conserved quantities made \cite{chern} a paramount work.

The metric and the Gaussian curvature are intrinsic properties of a surface, but alas insufficient to completely describe it. To this end, we need further information provided by its second fundamental form. While the first fundamental form (metric) can be though as the {\it way a two-dimensional being walks} on the surface (an intrinsic aspect), the second fundamental form tells us {\it how the surface behaves from, or looks like to, an observer located outside it}.

Given the importance of the second fundamental form and the relevance of the observations and ideas introduced in \cite{sasaki} and \cite{chern}, respectively, it is somewhat surprising that it was taken nearly three decades from \cite{chern} until the first works \cite{kah-book,kah-cag,kah} concerning second fundamental forms of the surfaces defined by the solutions of PSS equations.

Although the work by Chern and Tenenblat was born in the context of integrability of differential equations, and most of the follow-up works, not to say all, were concerned with these connections, see \cite{cat,reyes2000,reyes2002,reyes2006-sel,reyes2006-jde,reyes2011}, along time the integrability aspects were put aside and the research carried out has been more focused on geometric aspects and classification of equations describing PSS, see \cite{tarcisio,ding,kah-book,kah-cag,kah,keti2015,tito} and references therein. 

A considerable number of relevant PSS equations can be seen as dynamical systems in certain Banach spaces, and from the point of view of analysis of PDEs, qualitative aspects of their solutions are obtained from Cauchy problems, meaning that not only the equation is relevant, but also a condition satisfied by a given solution of the equation at a given time, very often $t=0$ (initial condition or datum). Usually, the regularity of the initial datum determines that of the corresponding solution of the equation.

From a geometric perspective, solutions emanating from Cauchy problems involving an equation for an unknown $u=u(x,t)$ can be seen as follows: given a certain curve $x\mapsto(x,0,u_0(x))$, can we find a solution $u$ for the equation such that the given curve belongs to the graph of $u$? Moreover, what does the regularity of the curve say about the graph of $u$? Is $u$ the only solution of the equation whose graph contains the given curve? 

Despite being a topic mostly concerned with analysis of PDEs, the paragraph above shows that the problem of existence and uniqueness of solutions (well-posedness) of PDEs makes sense in the context of PSS equations. Surprisingly, it seems this topic has been out of the agenda of the literature of PSS equations. The purpose of our paper is to shed light on it.

The main motivation for us to undertake the research reported in the present work are recent results reported in \cite{nazime} concerned with the equation
\bb\label{1.0.1}
u_t-u_{txx}=\p_x(2-\p_x)(1+\p_x)u^2,
\ee
which was discovered in \cite{nov} and recently has been proved to be geometrically integrable \cite{freire-tito-sam}, meaning that its solutions  describe a non-trivial family of pseudospherical surfaces. 

Equation \eqref{1.0.1} was studied in \cite{guo-bvp, li-na, li-jmaa,liu-jde} from the point of view of qualitative analysis, such as existence and uniqueness of solutions. More recently, in \cite[Theorem 5.1]{nazime} results from \cite{li-na,liu-jde} were combined with \cite{freire-tito-sam} to prove the existence of $C^\omega$ (metrics for) pseudospherical surfaces arising from the solutions of \eqref{1.0.1}.

The aforementioned result proved in \cite{nazime}, despite being established for $C^\omega$ solutions, strongly indicates the possibility of relating Cauchy problems and pseudospherical surfaces. Actually, it made such a connection, but by considering solutions emanating from an initial datum with $C^\omega$ regularity. The question is: Could we consider the same problem replacing a $C^\omega$ initial datum by one with lower regularity? Can we consider a periodic initial datum?

In line with the comments above, the vast majority of works in the field of PSS equations considers explicit or implicitly $C^\infty$ solutions of the PSS equations, which technically avoid problems regarding regularity (that is, how much smooth the object is) and lead to $C^\infty$ metrics. A simple question then arises: What may happen if we consider solutions with regularity other than $C^\infty$?

The answer to the questions above is given in our first result.

\begin{theorem}\label{thm1.1}
Let $u_0\in H^4(\s)$ be a non-trivial and non-constant initial datum, with $u-u_0''>0$, and consider the Cauchy problem
\bb\label{1.0.2}
\left\{
\ba{l}
u_t-u_{txx}=\p_x(2-\p_x)(1+\p_x)u^2,~~~~~~x\in \mathbb{R},~~~~~t>0,\\
\\
u(x,0)=u_0(x),~~~~~~x\in \mathbb{R},\\
\\
u(x,t)=u(x+1,t),~~~~~~x\in \mathbb{R},~~~~~t>0.
\ea\right.
\ee

Then there exists triads of $C^1$ one-forms $\omega_1,\,\omega_2,\,\omega_3$, with
\bb\label{1.0.3}
\omega_i=f_{i1}dx+f_{i2}dt,\quad 1\leq i\leq 3,
\ee
\bb\label{1.0.4}
f_{p1}=\mu_pf_{11}+\eta_p,\quad 1\leq p\leq 2,
\ee
where $\mu_p,\,\eta_p\in\R$, such that the forms \eqref{1.0.3} are defined on $U=\R\times(0,\infty)$, periodic with respect to $x$, and define a PSS whenever $\nabla u\neq(0,0)$, with $\omega_3$ being the Levi-Civita connection of the metric determined by $\omega_1$ and $\omega_2$. 

Moreover, fixed a pair $\{\omega_1,\,\omega_2\}$ and $p\in U$, there exists connection forms $\omega_{13}=a\omega_1+b\omega_2$, $\omega_{23}=b\omega_1+c\omega_2$, where $a,\,b,\,c$ are $C^\infty$ functions defined on an open neighborhood $V\subseteq U$ of $p$, such that $\{\omega_1,\omega_2,\omega_{13},\,\omega_{23}\}$ defines a PSS of Gaussian curvature ${\cal K}=-1$.
\end{theorem}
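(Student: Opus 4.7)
The plan is to construct the two triads in order, first the pair $\omega_1,\omega_2$ together with $\omega_3$ defining a pseudospherical metric, then the connection forms producing a compatible second fundamental form. I would begin by invoking Kato's semigroup theorem for quasilinear evolution equations (the promised main tool of the paper), applied to the convolution form of \eqref{1.0.1} obtained after inverting $1-\p_x^2$. Following the strategy used in \cite{li-na,liu-jde}, the hypotheses $u_0\in H^4(\s)$ and $u_0-u_0''>0$ place the problem inside Kato's framework and yield a unique $1$-periodic solution $u\in C([0,T);H^4(\s))\cap C^1([0,T);H^3(\s))$ on some maximal interval $[0,T)$; by Sobolev embedding $u$ and the derivatives $u_x,u_{xx},u_{xxx},u_t,u_{tx}$ are continuous on $U=\R\times(0,T)$. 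The geometric integrability of \eqref{1.0.1} proved in \cite{freire-tito-sam} then furnishes explicit polynomial expressions for the coefficients $f_{ij}$ in \eqref{1.0.3}--\eqref{1.0.4}, depending on two real parameters and yielding the announced two families of triads; each $f_{ij}$ inherits $C^1$ regularity from $u$, the structural equations $d\omega_1=\omega_3\wedge\omega_2$, $d\omega_2=\omega_1\wedge\omega_3$, $d\omega_3=-\omega_1\wedge\omega_2$ hold classically (being by construction equivalent to \eqref{1.0.1}), and periodicity in $x$ is inherited from $u$. At any point where $\nabla u\neq(0,0)$ one verifies directly that $\omega_1\wedge\omega_2\neq 0$, so that $\{\omega_1,\omega_2\}$ is a co-frame; the induced metric $g=\omega_1\otimes\omega_1+\omega_2\otimes\omega_2$ has constant Gaussian curvature $-1$, and uniqueness of the Levi-Civita connection one-form identifies it with $\omega_3$.

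For the second part, fix such a pair $\{\omega_1,\omega_2\}$, a point $p\in U$ with $\nabla u(p)\neq(0,0)$, and a simply connected neighborhood $V\subseteq U$ of $p$. The symmetric ansatz $\omega_{13}=a\omega_1+b\omega_2$, $\omega_{23}=b\omega_1+c\omega_2$ automatically produces a symmetric shape operator; it remains to impose Gauss's equation $ac-b^2=1$ (matching $\mathcal K=-1$) together with the Codazzi--Mainardi equations $d\omega_{13}=\omega_{12}\wedge\omega_{23}$ and $d\omega_{23}=\omega_{13}\wedge\omega_{21}$, so that $\{\omega_1,\omega_2,\omega_{13},\omega_{23}\}$ constitutes a pseudospherical structure. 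My plan is to locally isometrically identify $(V,g)$ with an open neighborhood in the hyperbolic plane and transport a smooth local realization in $\e$ (a patch of a pseudosphere or of a Dini surface, for instance) back to $V$; the pullback of the ambient second fundamental form then produces $a,b,c\in C^\infty(V)$ satisfying Gauss--Codazzi by construction, and these are the desired universal connection coefficients.

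The main obstacle lies precisely in this last step: the forms $\omega_1,\omega_2$ built in the first part are only $C^1$, hence the local isometry with $\mathbb{H}^2$ is \emph{a priori} only $C^1$, and the naive pullback of $C^\infty$ objects through a $C^1$ diffeomorphism need not be $C^\infty$ in the variables $(x,t)$. To retain the smoothness of $a,b,c$ stated in the theorem, I would perform the construction intrinsically on the target (hyperbolic) side, where every tensor is smooth, and reformulate the Codazzi system for constant negative curvature as a sine-Gordon equation for the angle between principal directions, whose smooth local solutions are classical. Choosing $C^\infty$ Cauchy data for this angle along a non-characteristic arc through $p$ yields $a,b,c\in C^\infty(V)$, and the algebraic constraint $ac-b^2=1$ combined with the freedom left in the ansatz ensures that the resulting $\omega_{13},\omega_{23}$ are compatible with the originally constructed $C^1$ co-frame $\{\omega_1,\omega_2\}$, completing the argument.
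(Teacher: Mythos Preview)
Your first part is broadly aligned with the paper, though you omit one ingredient: the theorem asserts the forms live on $U=\R\times(0,\infty)$, not merely $\R\times(0,T)$. The hypothesis $u_0-u_0''>0$ is there precisely to upgrade local to global existence (this is the paper's Theorem~\ref{thm1.3}), and you should invoke it explicitly.

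The genuine gap is in your construction of $a,b,c$. Your route---identify $(V,g)$ with a patch of $\mathbb H^2$, solve sine-Gordon there, and pull back---cannot deliver $C^\infty$ functions of $(x,t)$. The metric $g$ is only $C^1$ in the $(x,t)$ coordinates, so the isometry with $\mathbb H^2$ is at best $C^2$; pulling a smooth second fundamental form back through such a map gives at most $C^1$ coefficients in $(x,t)$. Your attempted fix (``work intrinsically on the hyperbolic side'') does not escape this: the Codazzi system, written in $(x,t)$ variables, has coefficients built from the $f_{ij}$, which depend on $u$ and are only $C^1$, so a $C^\infty$ solution in $(x,t)$ is not to be expected from a generic PDE argument. (Incidentally, the Gauss constraint should read $ac-b^2=-1$, not $+1$.)

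The paper's approach is entirely different and exploits a structural miracle specific to this class of PSS equations, established in \cite{keti2015,tarcisio}: for equations of the form \eqref{5.0.5} with the relevant parameters, the Gauss--Codazzi system for $a,b,c$ decouples from $u$ altogether and reduces to a system of \emph{ordinary} differential equations in the single variable $z=m_1x$ (equations \eqref{5.0.7}--\eqref{5.0.9}). The functions $a,b,c$ are therefore ``universal'': they depend only on $x$, not on the solution $u$ or on $t$, and are obtained by solving smooth ODEs---hence automatically $C^\infty$. This is what makes the $C^\infty$ claim compatible with the merely $C^1$ regularity of the metric, and it is the point your proposal misses.
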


In section \ref{sec2} we shall present all pertinent definitions and notions, but for now it suffices to say that a function belonging to $H^4(\s)$ is a real valued periodic function, with period $1$, of class $C^3$. 

Our theorem \ref{thm1.1} can be seen as an existence and uniqueness result for PSS surfaces. In fact, it says that from solutions of equation \eqref{1.0.1} whose graphs contain the regular curve $x\mapsto(x,0,u_0(x))$, with $u_0\in H^4(\s)$, we can obtain an open set $V\subseteq\R^2$ in which we have two possible choices to define a first fundamental form for a PSS surface with Gaussian curvature ${\cal K}=-1$. Moreover, we can locally define connection forms on each point of $V$. This fact, jointly with Bonnet theorem, tells us that we can locally define a PSS surface embedded in $\R^3$. 

A key point to understand and prove theorem \ref{thm1.1} is determining whether the problem \eqref{1.0.2} is well-posed. To this end, recognising the presence of the Helmholtz operator $\Lambda^{2}=1-\p_x^2$ in \eqref{1.0.1}, we can rewrite the problem \eqref{1.0.2} in an alternative form, given by
\bb\label{1.0.5}
\left\{
\ba{l}
u_t-2uu_x=\p_x\Lambda^{-2}(u^2+(u^2)_x),~~~~~~x\in \mathbb{R},~~~~~t>0,\\
\\
u(x,0)=u_0(x),~~~~~~x\in \mathbb{R},\\
\\
u(x,t)=u(x+1,t),~~~~~~x\in \mathbb{R},~~~~~t>0.
\ea\right.
\ee

\begin{theorem}\label{thm1.2}

Let $u_0\in H^{s}(\mathbb{S})$, $s>3/2$ be a given initial datum. Then there exists a maximal time of existence $T>0$, depending on $u_0$, such that there is a unique solution $u$ to \eqref{1.0.5} satisfying
$    u\in C^0(H^{s}(\mathbb{S}),[0,T))\cap C^1(H^{s-1}(\mathbb{S}),[0,T))$.
Moreover, the map $u_0\in H^{s}(\mathbb{S})	\rightarrow u$, is continuous from $H^{s}(\mathbb{S})$ to $C^0(H^{s}(\mathbb{S}),[0,T))\cap C^1(H^{s-1}(\mathbb{S}),[0,T))$ and $T$ is independent of $s$.
\end{theorem}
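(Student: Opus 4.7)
The plan is to rewrite \eqref{1.0.5} as an abstract quasi-linear evolution equation
\[ u_t + A(u)u = f(u), \qquad u(0)=u_0, \]
with $A(u):=-2u\,\p_x$ and $f(u):=\p_x\Lambda^{-2}(u^2+(u^2)_x)$, and then invoke Kato's theorem for quasi-linear evolutions in Banach spaces, taking $X=H^{s-1}(\s)$, $Y=H^{s}(\s)$, and the isomorphism $Q=\Lambda:Y\to X$. Using the identity $\p_x^2\Lambda^{-2}=\Lambda^{-2}-\mathrm{Id}$, the source simplifies to $f(u)=\p_x\Lambda^{-2}(u^2)+\Lambda^{-2}(u^2)-u^2$; since $H^{s}(\s)$ is a Banach algebra for $s>1/2$ and $\Lambda^{-2}$ is smoothing of order two, this immediately yields that $f:Y\to Y$ is Lipschitz on bounded sets.

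Verifying Kato's hypotheses then reduces to four points. First, $A(u)\in G(X,1,\beta)$ with $\beta=\beta(\|u\|_{Y})$, a quasi-m-accretivity statement obtained by integration by parts together with $\|\p_x u\|_{L^\infty}\lesssim\|u\|_{H^{s}}$ (valid because $s>3/2$ gives $H^{s}\hookrightarrow W^{1,\infty}$). Second, $u\mapsto A(u)\in L(Y,X)$ is Lipschitz on bounded sets, immediate from the algebra property. Third, the commutator $B(u):=[\Lambda,A(u)]\Lambda^{-1}$ is bounded on $X$ with norm depending on $\|u\|_{Y}$; this is a Kato--Ponce commutator estimate, whose periodic version is standard via Fourier series. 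Fourth, $u\mapsto B(u)$ is continuous into the strong operator topology of $L(X)$, a consequence of the commutator bound combined with the algebra property.

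Once these conditions are checked, Kato's theorem yields, for each $u_0\in H^{s}(\s)$ with $s>3/2$, a maximal time $T_{s}=T_{s}(\|u_0\|_{H^{s}})>0$ and a unique solution $u\in C^0([0,T_{s});H^{s})\cap C^1([0,T_{s});H^{s-1})$ that depends continuously on the datum. Independence of $T$ on $s$ follows by a standard persistence-of-regularity argument: for $s>s_0>3/2$ one applies Gronwall to $\|u\|_{H^{s}}^{2}$, estimating the nonlinear terms by $\|u\|_{W^{1,\infty}}\|u\|_{H^{s}}^{2}$ via the same Kato--Ponce bound; since $H^{s_0}\hookrightarrow W^{1,\infty}$, the lower-regularity lifespan $T_{s_0}$ already controls $\|u\|_{W^{1,\infty}}$ on $[0,T_{s_0})$, ruling out blow-up of $\|u\|_{H^{s}}$ there, so $T_{s}\geq T_{s_0}$; the opposite inequality is trivial, giving a common $T$.

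The main technical obstacle is the commutator estimate for $[\Lambda,u\p_x]$ in the periodic setting, since this is precisely what links accretivity in the weak norm $X$ to well-posedness in the strong norm $Y$ through Kato's $Q$-conjugation scheme. Once it is in place, the remaining verifications and the persistence argument are routine and mirror the Camassa--Holm/Novikov well-posedness theory cited in the introduction (see \cite{li-na,liu-jde}), because the left-hand side $u_t-2uu_x$ has the same transport structure and the right-hand side is a tame nonlocal term regularised by $\Lambda^{-2}$.
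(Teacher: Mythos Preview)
Your approach is correct and essentially identical to the paper's: both cast \eqref{1.0.5} as $u_t + A(u)u = f(u)$ with $A(u)=-2u\,\partial_x$ and $f(u)=\partial_x\Lambda^{-2}(u^2+(u^2)_x)$, take $X=H^{s-1}(\s)$, $Y=H^{s}(\s)$, $Q=\Lambda$, and verify Kato's hypotheses (A1)--(A4) via the same tools---integration by parts for accretivity, the algebra property for (A2) and (A4), and a Kato--Ponce-type commutator bound for $B(u)=[\Lambda,A(u)]\Lambda^{-1}$. The paper spells out the range condition for quasi-m-accretivity (closedness plus dense range via the adjoint) in more detail than your sketch, while you supply an explicit persistence-of-regularity argument for the $s$-independence of $T$ that the paper's Section~\ref{sec3} leaves implicit.
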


The inverse of the Helmholtz operator, denoted by $\Lambda^{-2}$ and acting on a function $f$, is defined by the convolution $g\ast f$, where 
\bb\label{1.0.6}
g(x)=\f{\cosh{(x-\lfloor x\rfloor-1/2)}}{2\sinh{(1/2)}}
\ee
and $\lfloor \cdot\rfloor$ denotes the greatest integer function. 

Some previous results in the literature had already proved well-posedness results concerning periodic solutions of the problem \eqref{1.0.5}, see \cite{liu-jde}, but it was either shown that $u\in C^0([0,T),H^{s}(\mathbb{S}))$ or $u$ is $C^\omega$ in both variables on a certain domain, see \cite[Theorem 1.1]{liu-jde} and \cite[Theorem 1.4]{liu-jde}, respectively. Although these results show the existence and uniqueness of solutions for a very large class of functions, they are unsuitable for our purposes because we need solutions with $C^1$ regularity with respect to $t$. 

The importance of our theorem \ref{thm1.2} comes from just the fact that it ensures we have $C^1$ solutions in both variables. Actually, this is a consequence of the Sobolev Lemma. In particular, it tells us that the solutions granted by theorem \ref{thm1.2} are strong solutions for the (non-local) first order PDE in \eqref{1.0.5}. A natural question then arises: is a strong solution of the equation in \eqref{1.0.5} also a strong solution of \eqref{1.0.1}? In general the answer is no! However, requiring enough regularity of the initial datum we can find solutions for the Cauchy problem \eqref{1.0.5} that are also strong solutions for \eqref{1.0.1}, and therefore, simultaneously strong solutions for both formulations of the equation.

In fact, whenever we consider an initial datum in $H^4(\s)$, the corresponding solution provided by theorem \ref{thm1.2} not only is a $C^1$ solution (in both variables), but the Sobolev Lemma also implies that $x\mapsto u(x,t)$, $t$ fixed, is $C^3$, meaning that the solution emanating from \eqref{1.0.5} is a strong, or classical, solution for \eqref{1.0.1}, which makes sense to be considered in the study of PSS and differential equations. 

Although the regularity of the initial datum is enough to make the corresponding solution of \eqref{1.0.5} a strong solution of \eqref{1.0.1}, it is insufficient to guarantee that the solution is global, in the sense that it is defined for every $t>0$. We can have global solutions requiring little more from the initial datum.

\begin{theorem}\label{thm1.3}
If $u_0\in H^4(\s)$ is a non-trivial initial datum, and $u_0(x)-u_0''(x)>0$, $x\in\R$, then the solution of the problem \eqref{1.0.5} exists for any $t>0$. Moreover, $u\in C^1(\R\times(0,\infty))$ and $x\mapsto u(x,t)$ is a $C^3$ periodic function, for each fixed $t>0$.
\end{theorem}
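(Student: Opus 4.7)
The plan is to invoke Theorem \ref{thm1.2} with $s=4$ to obtain a unique local solution
\[
u \in C^0([0,T^*),H^4(\mathbb{S})) \cap C^1([0,T^*),H^3(\mathbb{S}))
\]
on some maximal interval $[0,T^*)$, and to push $T^*$ to $+\infty$ by showing that the momentum $m := u - u_{xx}$ stays strictly positive on $[0,T^*)$. Positivity of $m$, coupled with conservation of the mean $\int_{\mathbb{S}} u\,dx$, will supply a uniform $L^\infty$ bound for $u_x$, and a Camassa--Holm-style blow-up alternative (coming out of the same energy estimates that underlie Theorem \ref{thm1.2}) will then rule out blow-up in finite time. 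Once $T^* = \infty$, the $C^3$-regularity of $x \mapsto u(x,t)$ and the joint $C^1$-regularity of $u$ follow from the Sobolev embeddings $H^4(\mathbb{S}) \hookrightarrow C^3(\mathbb{S})$ and $H^3(\mathbb{S}) \hookrightarrow C^2(\mathbb{S})$ together with the time regularity produced by Theorem \ref{thm1.2}.

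The heart of the argument is the sign preservation. I introduce the characteristic flow $\dot q(x,t) = -2u(q(x,t),t)$, $q(x,0)=x$, which is well defined on $[0,T^*)$ since $u \in C^0([0,T^*),C^3(\mathbb{S}))$. Differentiating $m(q(x,t),t)$ in $t$, using $m_t = \partial_x(2-\partial_x)(1+\partial_x)u^2$, and substituting $u_{xx} = u - m$ together with $u_{xxx} = u_x - m_x$ collapses the computation into the linear ODE
\begin{equation*}
\frac{d}{dt} m(q,t) + 2\big(u - 3u_x\big)(q,t)\, m(q,t) \;=\; 2\big(u - u_x\big)^2(q,t),
\end{equation*}
whose forcing term is manifestly non-negative. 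With the strictly positive integrating factor $J(t) := \exp\!\big(2\int_0^t (u - 3u_x)(q(s),s)\,ds\big)$ one gets $J(t)\,m(q(x,t),t) \geq m_0(x) > 0$; since $q(\cdot,t) \colon \mathbb{R} \to \mathbb{R}$ is a diffeomorphism, this forces $m(x,t) > 0$ on $\mathbb{R} \times [0,T^*)$.

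To close the argument, integrating \eqref{1.0.5} over one period shows $\int_{\mathbb{S}} u(x,t)\,dx$ is conserved, and combining this with $m > 0$ yields $\|m(\cdot,t)\|_{L^1(\mathbb{S})} = \int_{\mathbb{S}} u\,dx = \int_{\mathbb{S}} u_0\,dx$ for every $t \in [0,T^*)$. Writing $u = g * m$ and $u_x = g_x * m$ with $g, g_x \in L^\infty(\mathbb{S})$, one therefore obtains a uniform bound $\|u(\cdot,t)\|_\infty + \|u_x(\cdot,t)\|_\infty \leq C\|u_0\|_{L^1}$ on $[0,T^*)$, which prevents the $H^4$-norm from blowing up and hence forces $T^* = \infty$. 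The main obstacle is arriving at the clean identity in the second paragraph: the calculation of $\dot m(q,t)$ initially produces terms of indefinite sign, and it is only after the substitution $u_{xx} = u - m$ that one notices the algebraic combination $2u^2 - 4uu_x + 2u_x^2$ equals the perfect square $2(u - u_x)^2$; without this observation, sign preservation via Gronwall is not immediate. A minor secondary point is that the precise statement of the blow-up criterion used at the end must be extracted from the Kato-semigroup estimates underlying Theorem \ref{thm1.2}.
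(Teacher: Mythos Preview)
Your characteristics argument for sign preservation is correct—the identity $\frac{d}{dt}m(q,t)+2(u-3u_x)\,m=2(u-u_x)^2$ holds exactly as you claim—and is in fact more explicit than the paper, which simply invokes an analogous non-periodic lemma from the literature (Theorem~\ref{thm4.1}). Your route to the uniform bound on $\|u\|_\infty+\|u_x\|_\infty$ via $u=g\ast m$, $u_x=g_x\ast m$ and conservation of $\int_\s m\,dx$ is also tighter than the paper's Lemma~\ref{lem4.1}.

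The divergence is in the final step. You close the argument by invoking a blow-up alternative and label its derivation ``a minor secondary point,'' but in the paper this is precisely where the work lies: Section~\ref{sec4} does not appeal to any abstract criterion but instead runs an explicit bootstrap of energy inequalities, controlling $\|u\|_{H^1}$, then $\|u\|_{H^2}$, then $\|u\|_{H^3}$, and finally $\|u\|_{H^4}$, each stage feeding an $L^\infty$ bound on one more spatial derivative into the next Gronwall step. Kato's theory behind Theorem~\ref{thm1.2} only guarantees continuation while $\|u(\cdot,t)\|_{H^4}$ stays finite; the reduction of this to a bound on $\|u\|_\infty+\|u_x\|_\infty$ is itself an $H^s$ energy estimate (schematically $\tfrac{d}{dt}\|u\|_{H^s}^2\le C(\|u\|_\infty+\|u_x\|_\infty)\|u\|_{H^s}^2+\text{l.o.t.}$) that must be established for this particular equation. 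Your outline is sound, and such a criterion can indeed be proved along standard Kato--Ponce lines, but what you flag as minor is the main labour of the paper's proof; you should either carry out that estimate or cite a reference where the blow-up criterion is established for \eqref{1.0.1}.
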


It is important to note that in view of the Sobolev Lemma, $H^4(\s)$ is continuously and densely embedded in $H^s(\s)$, for $s\in(3/2,4)$. Therefore, both theorems \ref{thm1.2} and \ref{thm1.3} tell us that the problem \eqref{1.0.2} has only one solution $u$. 

\subsection{Novelty and challenges of the manuscript} 

We study the problem of PSS determined by the solutions of a given equation from the perspective of well-posedness of Cauchy problems, which as far as we know, has not been considered yet.

Let us highlight the relevance of our results by discussing the following problem: suppose we know a curve from the graph of a (unknown) solution of a PSS equation. Can we precisely describe the corresponding PSS?

Let us exemplify by considering the function $u_c(x,t):=e^{x-ct}$. Any member of the family ${\cal U}=\{u_c,\,c\in\R\}$ is a solution of \eqref{1.0.1} defined for $(x,t)\in\R\times(0,\infty)$, see \cite[page 5]{nazime}. For any $u\in{\cal U}$, let
$\Gr(u)=\{(x,t,u(x,t)),\,\,x\in\R,\,\,t>0\}.$

Consider the curve $\Gamma$, given by $x\mapsto(x,0,e^x)$ and let $\p \Gr(u)$ denote the boundary of a set $\Gr(u)$. Then it is easy to see that $\Gamma\subseteq \p\Gr(u)$, for any $u\in{\cal U}$. In particular, we have
$$\Gamma\subseteq \bigcap_{u\in{\cal U}} \p\Gr(u).$$

If we allow $t=0$ in the domain of $u_c$, then the curve $\Gamma$ belongs to the corresponding graph, but for our purposes it is enough to consider it lying in the boundary, the latter being disjoint from the graph.

On the other hand, \eqref{1.0.1} is a PSS equation (see \cite[Theorem 1]{freire-tito-sam}), and for any member $u_c$ of ${\cal U}$ we can construct a PSS $U_c$ in an intrinsic way (note that these solutions satisfy all required conditions for the existence of a PSS, see the comments after Theorem 1 in \cite{freire-tito-sam}).

Due to the fact that the curve $\Gamma$ belongs to the boundary of any graph of the solutions $u_c$, we cannot determine any specific PSS surface only knowing $\Gamma$. 

Our theorem \ref{thm1.1} gives a rather different answer to the same question. In geometric terms, it says that given a curve $\Gamma$ of the form $x\mapsto(x,0,u_0(x))$, as long as $u_0\in H^4(\s)$, we can precisely and intrinsically describe a PSS among all infinite surfaces emanating from all solutions of the PDE \eqref{1.0.1}.

To address this problem we make use of techniques of existence and uniqueness of solutions for PDEs that can be seen as dynamical systems in certain Banach spaces.
In view of this approach, we deal with solutions of the equation that are less regular than those usually considered in the literature of the PSS equations. One of the difficulties to be overcome is concerned with the regularity of the one-forms $\omega_1$ and $\omega_2$ defining the metric of the corresponding PSS. Most of the books in differential geometry require $C^\infty$ forms, although some of them require at least $C^2$ regularity. As we will better discuss in the next section, for the classical theory of curves and surfaces, we can have PSS surfaces from $C^1$ forms satisfying the structure equations for a surface. 

Last but not least, one of the challenges of this paper is that its main result is geometric, but the tools for proving it comes from modern approaches to prove qualitative aspects of solutions of Cauchy problems. For this reason, we tried our best to make clear and explain the technical aspects of each area, so that the readers can have a better reading and appreciation of our work. 

\subsection{Outline of the manuscript}
Since this is a paper focusing on Analysis and Geometry, in the next section we provide an overview about PSS and functional analysis. We also fix the notation, present essential concepts and revisit Kato's semi-group approach, which is the main tool for proving theorems \ref{thm1.2} and \ref{thm1.3}, whose demonstrations are given in sections \ref{sec3} and \ref{sec4}, respectively. Theorem \ref{thm1.1} is proved in section \ref{sec5}, whereas our conclusions are given in section \ref{sec6}.

\section{Notation, notions and preliminaries}\label{sec2}

In this section we introduce and fix the notation used throughout the manuscript. Given its plural and diverse aspects, we also present basic facts and concepts from differential geometry of surfaces and functional analysis, which are the main pillars of the work. Most of the geometric content can be better explored in \cite[Chapter 5]{ilka}, \cite[Chapter 4]{cle} and \cite[Chapter 2]{keti-book}, whereas our main references for functional analysis are \cite[Chapter 3]{iorio} and \cite[Chapter 4]{taylor}.
\subsection{Notation}

Given a function $u=u(x,t)$, by $u(x,\cdot)$ we mean the function $t\mapsto u(x,t)$, for fixed $x$, whereas $u(\cdot,t)$ denotes the function $x\mapsto u(x,t)$, for fixed $t$.

Let $I,J$ two open, non-empty subsets of $\R$. We say that $u\in C^0(I\times J)$ if $u=u(x,t)$ is continuous with respect to both variables $(x,t)\in I\times J$. Partial derivative of $u$ with respect to its first argument will be denoted by $u_x$ or $\p_x u$, whereas $u_t$ or $\p_t u$ will denote partial derivative with respect to the second argument. Higher order derivatives can be considered using the standard conventions.

For a positive integer $k$, we say that $u\in C^k(I\times J)$ if all partial derivatives of $u$ up to order $k$ (including the mixed ones) are continuous. Given a positive integer $n$, we denote by $u_{(n)}$ the set of ordered $n-th$ derivatives of $u$. Also, we say that $u$ is $C^k$ whenever all of its partial derivatives up to order $k$ are continuous on the domain of $u$.

By $C^{3,1}(I\times J)$ we mean the set of function $u:I\times J\rightarrow\R$ such that $u$, $u_x$, $u_t$, $u_{xx}$, $u_{xt}$, $u_{xxx}$ and $u_{xxt}$ belong to $C^0(I\times J)$.

Let $X$ be a Banach space of real valued functions and $I\subseteq\R$. The set $C^0(I,X)$ denotes collection of continuous functions such that $u(t,\cdot)\in X$. More generally, given a positive integer $n$, we say that $u\in C^n(I,X)$ if $\p_t^ku(t,\cdot) \in C^0(I,X)$, $0\leq k\leq n$.

\subsection{Structure equations and pseudospherical surfaces}

Let $\langle\cdot,\cdot\rangle$ be the usual inner product in $\R^3$ and denote the pair $(\R^3,\langle\cdot,\cdot\rangle)$ by $\e$, that is, the Euclidean space.

We recall that a surface is a two dimensional manifold in $\e$, which we generally denote by $\M$. Given a point $p\in\M$, the tangent and the co-tangent spaces to $\M$ at $p$ are denoted by $T_p\M$ and $T^\ast_p\M$, respectively. 

Let $\{e_1,e_2\}$ be vector (sufficiently differentiable) valued functions on $\M$, such that at each point $p\in\M$, we have: $\{e_1,e_2\}$ is orthonormal with respect to inner product $\langle\cdot,\cdot\rangle$; $\Span\{e_1,e_2\}=T_p\M$; $\{\omega_1,\omega_2\}$ is the dual bases of $\{e_1,e_2\}$. In particular, $\Span\{\omega_1,\omega_2\}=T_p^\ast\M$.

Let $\{\omega_1,\omega_2\}$ be the corresponding dual of the basis $\{e_1,e_2\}$. Since $\langle e_i,e_j\rangle$ is either $0$ or $1$, depending on whether $i=j$ or not, we have
\bb\label{2.2.1}
\langle d e_i,e_j\rangle+\langle e_i,de_j\rangle=0,
\ee
where $d(\cdot)$ denotes the usual differential, and we can then define one-forms
\bb\label{2.2.2}
\omega_{ij}=\langle de_i,e_j\rangle,
\ee
called {\it connection forms}, and from \eqref{2.2.1} we see that $\omega_{ij}=-\omega_{ji}$. 

A one-form $\omega$ can be written as $\omega=f(x,t)dx+g(x,t)dt$, where $f$ and $g$ are certain functions, called coefficients of the form $\omega$. We say that $\omega$ is of class $C^k$ if and only if both $f$ and $g$ are $C^k$ functions.

Let $\otimes$ and $\wedge$ be the tensor and wedge products (for further details, see \cite[page 39]{cle}, respectively. The dual forms $\omega_1$ and $\omega_2$, jointly with the connection forms, satisfy the following relations: 
\bb\label{2.2.3}
d\omega_1=\omega_2\wedge\omega_{21},\quad d\omega_2=\omega_1\wedge\omega_{12},
\ee
\bb\label{2.2.4}
\omega_1\wedge\omega_{13}+\omega_2\wedge\omega_{23}=0,
\ee
and
\bb\label{2.2.5}
d\omega_{12}=\omega_{13}\wedge\omega_{32},\quad d\omega_{13}=\omega_{12}\wedge\omega_{23},\quad d\omega_{23}=\omega_{21}\wedge\omega_{13}.
\ee

It is important to note that the connection form $\omega_{12}$ is completely determined by the forms $\omega_1$ and $\omega_2$, and is known as the Levi-Civita (connection form). For this reason, it is common write $\omega_3:=\omega_{12}$. Moreover, we can define the Gaussian curvature as being the function ${\cal K}$ satisfying the relation
\bb\label{2.2.6}
d\omega_3=-{\cal K}\,\omega_1\wedge\omega_2.
\ee

Equation \eqref{2.2.6} is called the {\it Gauss equation}, and it reflects the fact that the Gaussian curvature is intrinsically determined by the surface, whereas we can rewrite equations \eqref{2.2.3} in terms of form $\omega_3$, which reads
\bb\label{2.2.7}
d\omega_1=\omega_3\wedge\omega_{2},\quad d\omega_2=\omega_1\wedge\omega_{3}.
\ee
Equations \eqref{2.2.6}-\eqref{2.2.7} are called {\it structure equations} of the surface $\M$.

\begin{definition}\label{def2.1.2}
Let $\omega_1$, $\omega_2$, $\omega_{13}$, and $\omega_{23}$ be given one-forms on a surface $\M$ in $\e$, such that $\{\omega_1,\omega_2\}$ is LI, and $p\in\M$. The first and second fundamental forms of $\M$ are defined, on each $T_p\M$, by
$I(v)=\omega_1(v)^2+\omega_2(v)^2$ and $II(v)=\omega_{13}(v)\omega_1(v)+\omega_{23}(v)\omega_2(v)$, for each $v\in T_p\M$.
\end{definition}

Commonly one writes the first and the second fundamental forms as $
I=\omega_1^2+\omega_2^2$ and $II=\omega_{13}\omega_1+\omega_{23}\omega_2$,
with the convection $\al\be=\al\otimes\be$ and $\al^2=\al\al$, for any (one-)forms $\al$ and $\be$.

We observe that everything done so far refers to a given surface $\M$ in the euclidean space $\e$. A quite useful result for our purposes is
\begin{lemma}\label{lemma2.1}
Let $\omega_1$, $\omega_2$, $\omega_{12}$, $\omega_{13}$, and $\omega_{23}$ be $C^1$ one-forms. Then they determine a local surface up to a euclidean motion if and only if $\omega_1\wedge\omega_2\neq0$ and equations \eqref{2.2.3}--\eqref{2.2.5} are satisfied.  
\end{lemma}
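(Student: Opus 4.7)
The plan is to treat this as a version of the Fundamental Theorem of Surfaces (Bonnet's theorem), proving the two implications separately. For the \emph{necessity} direction, if a local surface $\M \subset \e$ already realises $\omega_1, \omega_2$ as the dual coframe of an orthonormal moving frame $\{e_1, e_2\}$ on $T_p\M$, and $\omega_{ij}=\langle de_i, e_j\rangle$ with $e_3 := e_1 \times e_2$, then $\omega_1 \wedge \omega_2 \neq 0$ because $\{e_1, e_2\}$ is pointwise linearly independent, while \eqref{2.2.3}--\eqref{2.2.5} are recovered by differentiating the identities $dX = \omega_1 e_1 + \omega_2 e_2$ and $d e_i = \sum_j \omega_{ij} e_j$ and using $d^2 = 0$ together with the antisymmetry $\omega_{ij} = -\omega_{ji}$ forced by orthonormality. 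Under our standing $C^1$ assumption on the forms, these exterior derivatives make sense in the distributional / classical sense and each calculation is a direct verification.

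For the \emph{sufficiency} direction, given the five forms on an open neighbourhood $V \subset \R^2$, the idea is to set up the Pfaffian system
\begin{equation}
dX = \omega_1 e_1 + \omega_2 e_2, \qquad de_i = \sum_{j=1}^{3} \omega_{ij}\,e_j, \quad i=1,2,3,
\end{equation}
where one extends the forms $\omega_{ij}$ antisymmetrically and writes $\omega_{i3} = -\omega_{3i}$. The integrability conditions of this system, computed by applying $d$ to each equation and using $d^2 = 0$, collapse to exactly \eqref{2.2.3}--\eqref{2.2.5} plus the trivial $\omega_{ij} + \omega_{ji} = 0$. Fixing a point $p_0 \in V$ and choosing a positively oriented orthonormal initial frame $\{e_1^0, e_2^0, e_3^0\}$ together with a point $X_0 \in \e$, one applies Frobenius' theorem to produce, on a neighbourhood of $p_0$, the unique solution $(X, e_1, e_2, e_3)$ of the system with these initial conditions. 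One then shows that orthonormality of the frame is preserved (this follows since $\omega_{ij}$ is skew-symmetric, so that the matrix $(\langle e_i, e_j\rangle)$ satisfies a linear ODE along curves, with initial condition the identity), and that $X$ parametrises an immersed surface because $\omega_1 \wedge \omega_2 \neq 0$ guarantees $dX$ has rank $2$.

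The main obstacle I anticipate is regularity: Frobenius' theorem is usually stated for $C^\infty$ (or at least $C^2$) coefficients, while here the coefficients of $\omega_{ij}$ are merely $C^1$. I would handle this by reducing to the ODE version: fix a $C^1$ path $\gamma$ in $V$ from $p_0$ to a variable point $p$ and integrate the pulled-back linear system $d\widetilde{F}/ds = A(\gamma(s))\widetilde{F}$ for $\widetilde{F} = (e_1, e_2, e_3)$ with $A$ continuous, obtaining a unique $C^1$ solution by the Picard--Lindelöf theorem. The integrability conditions \eqref{2.2.3}--\eqref{2.2.5} then guarantee, via Stokes' theorem applied over any small disk bounded by two such paths, that the resulting frame is path-independent, so $\widetilde{F}$ and $X$ are well defined and $C^1$ on a neighbourhood of $p_0$. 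The cited references \cite[Chapter 5]{ilka}, \cite[Chapter 4]{cle}, \cite[Chapter 2]{keti-book} can be invoked at this stage, as they spell out the low-regularity version of Bonnet's theorem in this exact context.

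Finally, uniqueness up to Euclidean motion follows from the observation that any other initial data $(X_0', e_1^{0\prime}, e_2^{0\prime}, e_3^{0\prime})$ consisting of a point and a positively oriented orthonormal frame differs from our fixed choice by a unique rigid motion $(R, v) \in SO(3) \ltimes \R^3$; applying this motion to the whole solution yields another solution of the same Pfaffian system with the new initial condition, and uniqueness in Frobenius/Picard--Lindelöf closes the argument.
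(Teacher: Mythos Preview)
Your proof sketch is mathematically sound and follows the standard route to Bonnet's theorem via the Pfaffian system for the moving frame, with the appropriate care taken about the $C^1$ regularity (reducing Frobenius to path-integration of a linear ODE with continuous coefficients, and using the structure equations to secure path-independence). This is essentially the classical argument.

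However, you should be aware that the paper does \emph{not} supply its own proof of this lemma at all: immediately after the statement it simply cites \cite[Theorem 10-19, page 232]{gug} (and \cite[Theorem 10-18, page 232]{gug}) for the proof. So there is no ``paper's own proof'' to compare against beyond that reference. Your write-up is consistent with what one finds in Guggenheimer, which is exactly the source the authors point to for the $C^1$ version. One small correction: at the end you invoke \cite[Chapter 5]{ilka}, \cite[Chapter 4]{cle}, \cite[Chapter 2]{keti-book} as the place where the low-regularity Bonnet theorem is spelled out, but in the paper those references are given for general background on surfaces; the specific low-regularity statement is attributed to \cite{gug}.
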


Lemma \ref{lemma2.1} (see \cite[Theorem 10-19, page 232]{gug} and also \cite[Theorem 10-18, page 232]{gug} for its proof) is a {\it sine qua non} result from classical differential geometry of surfaces that enabled us to consider solutions $u\in C^{3,1}(\R\times(0,\infty))$ and use them to prove our theorem \ref{thm1.1}.

\begin{remark}\label{rem2.1}
The relevance of lemma \ref{lemma2.1} for us is the following: it states that if a set of given one-forms in $\R^3$ satisfies its conditions, then they define, at least locally, a surface $\M$ in the euclidean space. Such a result is sometimes fundamental theorem of surface theory, see \cite[theorem 11, page 143]{ilka}, or also called Bonnet theorem, see \cite[theorem 4.39, page 127]{cle} or \cite[theorem 4.24, page 153]{ku}. 
\end{remark}

Surfaces for which their Gaussian curvatures are constant and negative are called {\it pseudospherical} surfaces \cite[page 9]{cle}.

\subsection{Equations describing pseudospherical surfaces}

If we take ${\cal K}=-1$ in the structure equations \eqref{2.2.6}--\eqref{2.2.7}, we then have
\bb\label{2.3.1}
d\omega_1=\omega_3\wedge\omega_2,\quad d\omega_2=\omega_1\wedge\omega_3,\quad d\omega_3=\omega_1\wedge\omega_2.
\ee
Sasaki's observation \cite{sasaki} can be summed up as follows: if we denote 
\bb\label{2.3.2}\omega_i=f_{i1}dx+f_{i2}dt,\quad 1\leq i\leq 3,
\ee
from the AKNS method \cite{akns} we can determine functions $f_{ij}$ for which the corresponding triad of one-forms satisfies \eqref{2.2.7} and, as a consequence, they determine a PSS in an intrinsic way.

Let $(x,t)$ be independent variables. A differential equation for a real valued function $u=u(x,t)$ of order $n$ is generically denoted by 
\bb\label{2.3.3}
{\cal E}(x,t,u,u_{(1)},\cdots,u_{({n)}})=0.
\ee
\begin{definition}\label{def2.2}
A differential equation \eqref{2.3.3} is said to describe a pseudospherical surface, or it is said to be of pseudospherical type, if it is a necessary and sufficient condition for the existence of differentiable functions $f_{ij}$, $1\leq i,j\leq 3$, such that the forms \eqref{2.3.2} satisfy the structure equations of a pseudospherical surface \eqref{2.3.1}.
\end{definition}

In practical terms, given a triad of one-forms $\omega_1$, $\omega_2$ and $\omega_3$, and an equation \eqref{2.3.3}, we can check if they describe a PSS surface in the following way: let us define a matrix of one-forms $\Omega$ by
\bb\label{2.3.4}\Omega=\f{1}{2}\begin{pmatrix}
\omega_2 & \omega_1-\omega_3 \\
\omega_1-\omega_3 & -\omega_2
\end{pmatrix}=:(\Omega_{ij}),\quad (\Omega\wedge\Omega)_{ij}:=(\sum_{k=1}^2\Omega_{ik}\wedge\Omega_{kj}),
\ee
\bb\label{2.3.5}
\Sigma:=d\Omega-\Omega\wedge\Omega,\quad d\Omega:=(d\Omega_{ij}).
\ee

If, when restricted to the manifold determined by the solutions of \eqref{2.3.3}, the matrix $\Sigma$ vanishes, we say that \eqref{2.3.3} is a PSS equation, and the triad $\{\omega_1,\, \omega_2,\, \omega_3\}$ satisfies the structure equations of a PSS equation with Gaussian curvature ${\cal K}=-1$.

\begin{example}\label{example2.1} 
Let $m_1\in\{-2,1\}$, $\mu\in\R$ and consider the triad of one-forms
\bb\label{2.3.6}
\ba{lcl}
\omega_1&=&\Big(u-u_{xx}\Big)dx+\Big(2u(u-u_{xx})+\psi\Big)dt,\\
\\
\omega_2&=&\Big(\mu (u-u_{xx})\pm m_{1}\sqrt{1+\mu^{2}}\Big)dx+\ds{\mu\big(2u(u-u_{xx})+\psi\big)}dt,\\
\\
\omega_3&=&\Big(\pm\sqrt{1+\mu^{2}}(u-u_{xx})+m_{1}\mu \Big)dx\\
\\
&&\pm\Big(\ds{\sqrt{1+\mu^{2}}\big(2u(u-u_{xx})+\psi\big)}\Big)dt,
\ea
\ee
where
\bb\label{2.3.7}
\psi:=\frac{4}{m_{1}}uu_x-2u_x^{2}-2u^{2},
\ee
and
\bb\label{2.3.8}
{\cal E}=u_{t}-u_{txx}-4uu_{x}-2u_{x}^{2}-2uu_{xx}+6u_{x}u_{xx}+2uu_{xxx}.
\ee

Note that ${\cal E}=0$ is nothing but \eqref{1.0.1}. It is straightforward, but lengthy, to confirm that (see \cite[Theorem 4.5]{tito})
\bb\label{2.3.9}
\ba{lcl}
d\omega_1-\omega_3\wedge\omega_2&=&{\cal E}dx\wedge dt,\quad
d\omega_2-\omega_1\wedge\omega_3={\cal E}dx\wedge dt,\\
\\
d\omega_3-\omega_1\wedge\omega_2&=&\pm\sqrt{1+\mu^2}{\cal E}dx\wedge dt.
\ea
\ee

Substituting \eqref{2.3.6}--\eqref{2.3.7} into \eqref{2.3.4}, after reckoning we conclude that \eqref{2.3.5} is given by
\bb\label{2.3.10}
\Sigma=\f{{\cal E}}{2}\begin{pmatrix}
1 & 1\pm\sqrt{1+\mu^2} \\
1\pm\sqrt{1+\mu^2} & -1
\end{pmatrix}.
\ee
Therefore, $\Sigma=0$ if and only if ${\cal E}=0$, but ${\cal E}=0$ if and only if the forms \eqref{2.3.6} satisfies the structure equations \eqref{2.3.1} for a PSS with ${\cal K}=-1$ in view of \eqref{2.3.9}. In particular, $u$ must be a solution of \eqref{1.0.1}.
\end{example}

It is important to highlight that a {\it sine qua non} condition for the existence of a PSS defined on an open set $\Omega$ contained in the domain of a solution $u$ is that $\omega_1\wedge\omega_2\neq0$ whenever $(x,t)\in\Omega$, otherwise the Gaussian curvature cannot be inferred from the Gauss equation \eqref{2.2.6}.

\begin{definition}\label{def2.3} Suppose that \eqref{2.3.3} is a PSS equation with corresponding one forms satisfying \eqref{2.3.1}. A solution $u$ of \eqref{2.3.3} for which $\omega_1\wedge\omega_2\neq0$ is called {\it generic}, whereas those satisfying $\omega_1\wedge\omega_2=0$ are said to be non-generic.
\end{definition}

\begin{example}\label{example2.2}
From the one-forms $\omega_1$ and $\omega_2$ in \eqref{2.3.6}, we obtain
\bb\label{2.3.11}
\omega_1\wedge\omega_2=\pm\sqrt{1+\mu^2}\Big(2m_1uu_{xx}-4uu_x+2m_1u_x^2\Big)dx\wedge dt.
\ee

The condition $\omega_1\wedge\omega_2=0$ on an open set $\Omega$ contained in the domain of $u$ is satisfied in the following circumstances:
\begin{itemize}
    \item For $m_1=-2$, then $\phi(x,t)=\pm\sqrt{ae^{-x}+b}$;
    \item For $m_1=1$, then $\phi(x,t)=\pm\sqrt{ae^{2x}+b}$ or $\phi(x,t)=f(t)e^{x}$.
\end{itemize}

Above $a$ and $b$ are real constants, whereas $f\in C^1(\R)$.

Let $u$ be a solution of \eqref{1.0.1}, satisfying the condition $u(x+1,t)=u(x,t)$. Then it is non-generic on an open set $\Omega$ if and only if it is constant. 
\end{example}

\subsection{Sobolev spaces and a few of functional analysis}
    
Let ${\cal P}[0,1]$ be the collection of all periodic functions $f:\R\rightarrow\mathbb{C}$ with period $1$. Given a positive integer $k$, we denote by $f^{(n)}$ its $n-$th order derivative, while the set of functions $f$ for which $f^{(n)}\in C^0(\R)$, $0\leq n\leq k$, is denoted by $C^k(\R)$. If $k$ is a non-negative integer, we define $\C^k[0,1]=C^k(\R)\cap{\cal P}[0,1]$. For the very particular case $k=\infty$, we write ${\cal P}$ instead of $\C^\infty[0,1]$, with topological dual denoted by ${\cal P}'$. Recall that a member of ${\cal P}'$ is a continuous linear functional $f:{\cal P}\rightarrow\mathbb{C}$.

The Fourier transform of $f\in{\cal P}'$ is defined by
$$\hat{f}(k)=\f{1}{2\pi}\int_{0}^1 f(x)e^{-ikx}dx.$$
Let $\ell^2(\mathbb{Z})$ be the collection of sequences $\al=(\al_n)_{n\in\mathbb{Z}}$ such that
$$\sum_{n\in\mathbb{Z}}|\al_n|^2<\infty.$$

Given $s\in\R$, we denote by $\ell^2_s(\mathbb{Z})$ as the collection of $\al\in\ell^2(\mathbb{Z})$ such that
$$\sum_{k=-\infty}^\infty (1+|k|^2)^s|\al_k|^2<\infty,$$
which has a structure of a Banach space when endowed with norm
$$\|\al\|_{\ell^2_s}=\sqrt{\sum_{k=-\infty}^\infty (1+|k|^2)|\al_k|^2}.$$

The periodic Sobolev space of order $s$ is
$H^s_{\text{per}}=\{f\in{\cal P}';\,\,(\hat{f}(k))_{k\in\mathbb{Z}}\in\ell_s^2(\mathbb{Z})\}$,
and the sesquilinear form
$$\big(f\big|g\big)_s=\sum_{k=-\infty}^\infty(1+|k|^2)^2\hat{f}(k)\overline{\hat{g}(k)}$$
turns it into a Hilbert space. In particular, note that $H^0_{\text{per}}=L^2[0,1]$.

Let us now define the following equivalence relation: given $a,b\in\R$, we say that $a\sim b$ if $b=a+k$, for some integer $k$. The quotient space $\R/\sim$ can be identified with the set $[0,1)$, which we shall denote by $\s$. For this reason, henceforth we define $H^s(\s):=H_{\text{per}}[0,1]$. The norm of $H^s(\s)$ will be denoted by $\|\cdot\|_s$, whereas $\|\cdot\|_\infty$ is reserved for the norm in $L^\infty$.

Given two Banach spaces $X$ and $Y$, we write $X\hookrightarrow Y$ to mean that $X$ is continuously and densely embedded in $Y$.

\begin{lemma}{\tt(\cite[Theorem 3.193, page 201]{iorio})}\label{lemma2.2}
Let $s,r\in\R$, with $s\geq r$. Then $H^s(\s)\hookrightarrow H^r(\s)$ and
$\|f\|_r\leq\|f\|_s,$
for any $f\in H^s(\s)$. In particular, $H^s(\s)\hookrightarrow L^2[0,1]$ for any $s\geq0$.
\end{lemma}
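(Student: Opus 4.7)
The plan is to exploit the Fourier-analytic definition of $H^s(\s)$ given in the preceding subsection. The key elementary observation is that $1+|k|^2\geq 1$ for every $k\in\mathbb{Z}$, so the function $\lambda\mapsto(1+|k|^2)^\lambda$ is non-decreasing. Hence, whenever $s\geq r$ and $f\in H^s(\s)$, we have termwise
$$(1+|k|^2)^r\,|\hat f(k)|^2\leq (1+|k|^2)^s\,|\hat f(k)|^2,\qquad k\in\mathbb{Z}.$$
Summing over $k$ shows that $(\hat f(k))_{k\in\mathbb{Z}}\in\ell^2_r(\mathbb{Z})$, so $f\in H^r(\s)$, and simultaneously yields the norm comparison $\|f\|_r\leq\|f\|_s$. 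This gives both the set-theoretic inclusion $H^s(\s)\subseteq H^r(\s)$ and the continuity of the inclusion map.

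Next I would handle density. The plan is to use $\mathcal{P}$ (equivalently, the trigonometric polynomials) as a common dense subspace. Each trigonometric polynomial has only finitely many nonzero Fourier coefficients, so it belongs to every $H^\sigma(\s)$, $\sigma\in\R$; in particular $\mathcal{P}\subseteq H^s(\s)$. For density of $\mathcal{P}$ in $H^r(\s)$, given $f\in H^r(\s)$ I would approximate $f$ by its Fourier partial sums $S_N f=\sum_{|k|\leq N}\hat f(k)e^{2\pi ikx}$ and observe
$$\|f-S_N f\|_r^2=\sum_{|k|>N}(1+|k|^2)^r|\hat f(k)|^2\xrightarrow[N\to\infty]{}0,$$
the convergence to zero being the tail of the convergent series defining $\|f\|_r^2$. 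Since $\mathcal{P}\subseteq H^s(\s)\subseteq H^r(\s)$ and $\mathcal{P}$ is dense in $H^r(\s)$, the intermediate space $H^s(\s)$ is also dense in $H^r(\s)$, which completes the verification that $H^s(\s)\hookrightarrow H^r(\s)$.

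The ``in particular'' claim follows by taking $r=0$, since the text identifies $H^0(\s)$ with $L^2[0,1]$. There is essentially no serious obstacle in this argument; it is a routine unpacking of the definitions, and the only point requiring a brief word is the density of trigonometric polynomials, which follows directly from the $\ell^2_s$-summability of Fourier coefficients. For this reason the result is simply quoted from Iorio's monograph rather than reproved in the manuscript.
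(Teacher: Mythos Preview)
Your proof is correct and is exactly the standard Fourier-analytic argument one gives for this embedding; the paper does not supply its own proof but merely cites the result from Iorio's monograph, as you yourself observe at the end. There is nothing to compare against, and no gap in your reasoning.
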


The next result is known as Sobolev Lemma, or also as Sobolev Embedding Theorem.

\begin{lemma}{\tt(\cite[Theorem 3.195, page 204]{iorio}, \cite[Proposition 3.3, page 329]{taylor})}\label{lemma2.3} 
If $s>1/2$, then $H^s(\s)\hookrightarrow C_{\text{per}}^0[0,1]$ and $\|f\|_\infty\leq c\|f\|_s$, for some constant $c$  depending only on $s$, where $f\in H^s(\s)$. More generally, if $s>1/2+m$, where $m$ is a positive integer, then $H^s(\s)\hookrightarrow \C^m[0,1]$. 
\end{lemma}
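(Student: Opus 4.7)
The plan is to pass to the Fourier side and reduce everything to an elementary Cauchy--Schwarz estimate on weighted $\ell^2$ sequences. For $f\in H^s(\s)$ with $s>1/2$, the natural candidate for the continuous representative is the Fourier series
\bb\label{plan1}
S_N(x)=\sum_{|k|\leq N}\hat f(k)e^{2\pi ikx},
\ee
and the key observation is the splitting $|\hat f(k)|=(1+|k|^2)^{-s/2}\cdot(1+|k|^2)^{s/2}|\hat f(k)|$, which by Cauchy--Schwarz gives
\bb\label{plan2}
\sum_{k\in\mathbb{Z}}|\hat f(k)|\leq\Bigl(\sum_{k\in\mathbb{Z}}(1+|k|^2)^{-s}\Bigr)^{1/2}\|f\|_s=:c(s)\|f\|_s,
\ee
with $c(s)<\infty$ precisely because $2s>1$. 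This gives both $|S_N(x)|\leq c(s)\|f\|_s$ uniformly in $N$ and $x$, and uniform convergence of $S_N$ to some continuous, $1$-periodic limit $\tilde f$. Comparing Fourier coefficients shows $\tilde f=f$ as elements of ${\cal P}'$, so $f$ admits a continuous periodic representative with $\|f\|_\infty\leq c(s)\|f\|_s$. Density of trigonometric polynomials in both $H^s(\s)$ and $\C^0[0,1]$ then yields the embedding $H^s(\s)\hookrightarrow\C^0[0,1]$.

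For the higher-order case $s>1/2+m$, I would run the same argument on the termwise-differentiated series. Formally the $j$-th derivative of $S_N$ has Fourier coefficients $(2\pi ik)^j\hat f(k)$, so the analogous Cauchy--Schwarz bound reads
\bb\label{plan3}
\sum_{k\in\mathbb{Z}}|k|^j|\hat f(k)|\leq\Bigl(\sum_{k\in\mathbb{Z}}|k|^{2j}(1+|k|^2)^{-s}\Bigr)^{1/2}\|f\|_s,
\ee
and the sum on the right converges for every $0\leq j\leq m$ because $2(s-j)>1$. Hence each formally differentiated series converges uniformly to a continuous periodic function, which by a standard theorem on uniform limits of derivatives forces $f\in\C^m[0,1]$ with $\|\p^j f\|_\infty\leq c(s,j)\|f\|_s$ for $0\leq j\leq m$; summing these bounds gives the continuous embedding.

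The only delicate points are bookkeeping rather than genuine obstacles: checking that the convergent series above really is the almost-everywhere value of the distribution $f$ (done by testing against trigonometric polynomials), and justifying termwise differentiation (which is immediate once one has uniform convergence of the derivative series). The main analytic input is the convergence of $\sum(1+|k|^2)^{-s}$ for $s>1/2$, which is where the threshold in the statement comes from; everything else is linear algebra on Fourier coefficients combined with the definition of the $H^s$ norm recalled in the preceding subsection.
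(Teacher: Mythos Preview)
Your argument is correct and is in fact the standard proof of the periodic Sobolev embedding: the Cauchy--Schwarz splitting \eqref{plan2} gives absolute (hence uniform) convergence of the Fourier series when $s>1/2$, and the same mechanism applied to the differentiated series \eqref{plan3} handles the $C^m$ case when $s>1/2+m$. The bookkeeping points you flag (identifying the uniform limit with the distribution $f$, and termwise differentiation under uniform convergence of the derivative series) are indeed routine.

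There is nothing to compare against in the paper itself: Lemma~\ref{lemma2.3} is stated with a citation to \cite[Theorem 3.195]{iorio} and \cite[Proposition 3.3]{taylor} and no proof is supplied. Your write-up is exactly the argument those references contain. One cosmetic point: the paper's Fourier transform convention uses $e^{-ikx}$ rather than $e^{-2\pi ikx}$, so if you want to match the surrounding notation you would drop the $2\pi$ in \eqref{plan1}; this affects only constants and not the substance of the proof.
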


We conclude our revision on Sobolev spaces by recalling the algebra property.
\begin{lemma}{\tt(\cite[Theorem 3.200, page 207]{iorio})}\label{lemma2.4}
If $s>1/2$, for any $f,g\in H^s(\s)$, we have $fg\in H^s(\s)$ and their norm satisfies the estimate
$\|fg\|_s\leq c\|f\|_s\|g\|_s,$
for some constant $c>0$ depending only on $s$.
\end{lemma}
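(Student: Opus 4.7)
The plan is to work entirely on the Fourier side, using the isometric isomorphism $f\mapsto(\hat{f}(k))_{k\in\mathbb{Z}}$ between $H^s(\s)$ and $\ell^2_s(\mathbb{Z})$ together with the convolution identity $\widehat{fg}(k)=\sum_{j\in\mathbb{Z}}\hat{f}(k-j)\hat{g}(j)$. The target then becomes the weighted convolution estimate
$$\|fg\|_s^2=\sum_{k\in\mathbb{Z}}(1+|k|^2)^s\Big|\sum_{j\in\mathbb{Z}}\hat{f}(k-j)\hat{g}(j)\Big|^2\leq c^2\|f\|_s^2\|g\|_s^2,$$
and the essential non-trivial task is redistributing the weight $(1+|k|^2)^{s/2}$ onto the two factors of the convolution.

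The technical ingredient I would invoke is an elementary inequality of Peetre type: for each $s\geq 0$ there exists $C_s>0$ with
$$(1+|k|^2)^{s/2}\leq C_s\big[(1+|k-j|^2)^{s/2}+(1+|j|^2)^{s/2}\big],\quad j,k\in\mathbb{Z},$$
which follows from $|k|\leq|k-j|+|j|$ together with the trivial estimate $(a+b)^s\leq 2^{s}(a^s+b^s)$. Setting $a_k=(1+|k|^2)^{s/2}|\hat{f}(k)|$ and $b_k=(1+|k|^2)^{s/2}|\hat{g}(k)|$, so that $\|a\|_{\ell^2}=\|f\|_s$ and $\|b\|_{\ell^2}=\|g\|_s$, the Peetre bound inserted into the convolution yields the pointwise estimate
$$(1+|k|^2)^{s/2}|\widehat{fg}(k)|\leq C_s\big[(a\ast|\hat{g}|)(k)+(|\hat{f}|\ast b)(k)\big].$$
Taking $\ell^2$-norms and applying Young's convolution inequality $\|\alpha\ast\beta\|_{\ell^2}\leq\|\alpha\|_{\ell^2}\|\beta\|_{\ell^1}$ then yields $\|fg\|_s\leq C_s\big(\|f\|_s\||\hat{g}|\|_{\ell^1}+\||\hat{f}|\|_{\ell^1}\|g\|_s\big)$.

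To close the argument I would prove the auxiliary bound $\||\hat{h}|\|_{\ell^1}\leq C_s'\|h\|_s$ for $h\in H^s(\s)$, via Cauchy--Schwarz:
$$\sum_{k\in\mathbb{Z}}|\hat{h}(k)|=\sum_{k\in\mathbb{Z}}(1+|k|^2)^{-s/2}\,(1+|k|^2)^{s/2}|\hat{h}(k)|\leq\Big(\sum_{k\in\mathbb{Z}}(1+|k|^2)^{-s}\Big)^{1/2}\|h\|_s,$$
where the series on the right converges precisely when $s>1/2$. This is the single place where the hypothesis $s>1/2$ enters the argument, and it is sharp in the sense that at $s=1/2$ the series diverges and $H^{1/2}(\s)$ famously fails to be an algebra. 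Substituting back yields the desired inequality with $c=2C_sC_s'$. The main obstacle I anticipate is really just the careful bookkeeping in Peetre's inequality with an explicit $s$-dependent constant; once the weight has been split, everything else reduces to standard Cauchy--Schwarz and Young manipulations on $\ell^p(\mathbb{Z})$.
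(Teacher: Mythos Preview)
Your argument is correct. The Peetre-type weight splitting, Young's inequality on $\ell^2\ast\ell^1$, and the Cauchy--Schwarz step giving $\|\hat{h}\|_{\ell^1}\leq C_s'\|h\|_s$ precisely when $s>1/2$ together form the standard textbook route to the algebra property, and nothing in your sketch is problematic apart from possibly tracking a normalisation constant in the convolution identity (the paper's Fourier convention has a $1/2\pi$ factor, so $\widehat{fg}$ picks up a harmless multiplicative constant).

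Note, however, that the paper does not supply its own proof of this lemma: it is quoted verbatim from \cite[Theorem 3.200, page 207]{iorio} and used as a black box. Your approach is essentially the argument one finds in Iorio or any comparable reference, so there is no meaningful divergence to comment on.
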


\subsection{Semigroup Approach} \label{sec2.4}

Let us revisit basic aspects of Kato's theory \cite{KatoI,KatoII,Pazy}, also known as semigroup approach, which is our main tool for proving well-posedness of solutions with the regularity we need to make them consistent with the geometric nature of our problem.

Let $X$ be a Hilbert space, and let $u(\cdot,t)\in X$ such that 
  \begin{equation}
  \label{qlee}
     u_t +A(u)u = f(u), ~~~~t\geq 0,~~~~~u(0)=u_0.
  \end{equation}
Let $Y\hookrightarrow X$ and $S:Y\rightarrow X$ be a topological isomorphism. Assume that

\begin{itemize}
\item[(A1)]  For any given $r>0$ it holds that for all $u \in  \mathrm B_r(0) \subseteq Y$ (the ball around the origin in $Y$ with radius $r$), the linear operator $A(u)\colon X \to X$ generates a strongly continuous semigroup $T_u(t)$ in $X$ which satisfies $\| T_u(t) \|_{\mathcal L(X)} \leq \mathrm e^{\omega_r t}$, for all $t\in [0,\infty)$, for a uniform constant $\omega_r > 0$;
\item[(A2)] $A$ maps $Y$ into $\mathcal L(Y,X)$, more precisely the domain $D(A(u))$ contains $Y$ and the restriction $A(u)|_Y$ belongs to $\mathcal L(Y,X)$ for any $u\in Y$. Furthermore $A$ is Lipschitz continuous in the sense that for all $r>0$ there exists a constant $C_1$ which only depends on $r$ such that
$
\| A(u) - A(v) \|_{\mathcal L(Y,X)} \leq C_1 \, \|u-v\|_X 
$
for all $u,~v\in\mathrm B_r(0) \subseteq Y$.
\item[(A3)] For any $u\in Y$ there exists a bounded linear operator $B(u) \in \mathcal L(X)$ satisfying $B(u) = S A(u) S^{-1} - A(u)$ and $B \colon Y \to \mathcal L(X)$ is uniformly bounded on bounded sets in $Y$. Furthermore for all $r>0$ there exists a constant $C_2$ which depends only on $r$ such that  
$
\| B(u) - B(v)\|_{\mathcal L(X)} \leq C_2 \, \|u-v\|_Y,
$
for all $u,~v \in \mathrm B_r(0)\subseteq Y$;
\item[(A4)] 
For all $t\in[0,\infty)$, $f$ is uniformly bounded on bounded sets in $Y$. Moreover, the map $f\colon Y \to Y$ is locally $X$-Lipschitz continuous in the sense that for every $r>0$ there exists a constant $C_3>0$, depending only on $r$, such that
$\| f(u) - f(v)\|_{X} \leq C_3 \, \|u-v\|_X$, for all $u,~v \in \mathrm B_r(0) \subseteq Y$,
and locally $Y$-Lipschitz continuous in the sense that for every $r>0$ there exists a constant $C_4>0$, depending only on $r$, such that
$\| f(u) - f(v)\|_{Y} \leq C_4$ $\|u-v\|_Y$, for all $u,~v \in \mathrm B_r(0) \subseteq Y$.
\end{itemize}

\begin{lemma} \cite{KatoI}
\label{kato}
Assume that (A1)-(A4) hold. Then for given $u_0\in Y$, there is a maximal time of existence $T>0$, depending on $u_0$, and a unique solution $u$ to (\ref{qlee}) in $X$ such that
$
u=u(u_0,.)\in C^0(Y,[0,T))\cap C^1(X,[0,T)).
$
Moreover, the solution depends continuously on the initial data,
i.e. the map $u_0\rightarrow u(u_0,.) $ is continuous from $Y$ to \mbox{$C^0(Y,[0,T))\cap C^1(X,[0,T))$}.
\end{lemma}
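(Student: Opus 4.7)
The plan is to follow Kato's original iteration scheme by reducing the quasilinear problem \eqref{qlee} to a fixed-point problem for a linearized non-autonomous Cauchy problem, and then exploiting the assumptions (A1)--(A4) to close the estimates.

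First I would \emph{freeze the nonlinearity}: for $r>\|u_0\|_Y$ and $T>0$ to be chosen, introduce the closed set
\[
E_T^r=\{v\in C^0([0,T];Y):\, v(0)=u_0,\ \|v(t)\|_Y\leq r\ \forall t\in[0,T]\},
\]
and for each $v\in E_T^r$ consider the linear non-autonomous problem
\[
w_t+A(v(t))\,w=f(v(t)),\qquad w(0)=u_0.
\]
By (A1), each $A(v(t))$ generates a $C_0$-semigroup on $X$ whose exponential growth is uniformly controlled on bounded sets of $Y$; by (A2), the map $t\mapsto A(v(t))\in\mathcal L(Y,X)$ is continuous. Together these produce a Kato-stable family of generators, to which the classical Kato--Tanabe construction associates a two-parameter evolution system $\{U_v(t,s)\}_{0\leq s\leq t\leq T}$ on $X$. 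The solution of the linearized problem is then given by Duhamel's formula
\[
w(t)=U_v(t,0)u_0+\int_0^t U_v(t,s)f(v(s))\,ds.
\]

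Second I would \emph{lift the evolution system to $Y$}. This is the step where assumption (A3) is essential: the intertwining relation $SA(u)S^{-1}=A(u)+B(u)$ with $B(u)\in\mathcal L(X)$ bounded on bounded sets of $Y$ allows one to show that $SU_v(t,s)S^{-1}$ is itself an evolution system generated by $A(v(t))+B(v(t))$; conjugating back yields $U_v(t,s)\in\mathcal L(Y)$ together with the a priori bound $\|U_v(t,s)\|_{\mathcal L(Y)}\leq e^{C(r)(t-s)}$ and continuity in $(t,s)$. Combined with the uniform boundedness of $f$ on bounded sets in $Y$ from (A4), this delivers $w\in C^0([0,T];Y)\cap C^1([0,T];X)$ and, by Gronwall's inequality, $\|w(t)\|_Y\leq r$ provided $T=T(r)$ is chosen small enough. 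Hence the map $\Phi:v\mapsto w$ sends $E_T^r$ into itself.

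Third I would \emph{run a contraction argument in the weaker norm}. Given $v_1,v_2\in E_T^r$ with images $w_1,w_2$, the difference $\delta=w_1-w_2$ satisfies
\[
\delta_t+A(v_1)\delta=(A(v_2)-A(v_1))w_2+f(v_1)-f(v_2),\qquad \delta(0)=0,
\]
and a straightforward energy estimate in $X$, using the $X$-Lipschitz bounds for $A$ (A2), for $B$ implicitly (A3), and for $f$ (A4), yields
\[
\|\delta(t)\|_X\leq C(r)\int_0^t\|v_1(s)-v_2(s)\|_X\,ds.
\]
For $T$ small this makes $\Phi$ a strict contraction on $(E_T^r,d_X)$, which is complete because it is $d_X$-closed in $C^0([0,T];X)$ while $Y$-bounded sets are metrizable. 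Banach's fixed point theorem produces the unique solution $u\in E_T^r$. Uniqueness in the class $C^0([0,T];Y)\cap C^1([0,T];X)$ and continuous dependence on $u_0$ follow from the same energy estimate, and the maximal time $T$ is obtained by a standard continuation argument, blowing up the $Y$-norm if $T<\infty$.

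The main obstacle, in my view, is the second step: transferring the linear evolution operator $U_v(t,s)$ from $X$ to $Y$ using only the intertwining hypothesis (A3). One has to approximate $v\in C^0([0,T];Y)$ by piecewise-constant functions, assemble $U_v(t,s)$ from products of semigroups $T_{v_k}(\cdot)$, control the conjugation error $SU_v S^{-1}-U_v$ through $B(v)$ uniformly in the partition, and then pass to the limit. Without this step the Duhamel formula only gives a mild solution in $X$, and one cannot close the $Y$-bound needed to make $\Phi$ stabilize $E_T^r$.
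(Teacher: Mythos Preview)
Your sketch is a faithful outline of Kato's original argument from \cite{KatoI}: freeze the nonlinearity, build the evolution system for the linearized problem via (A1)--(A2), lift it to $Y$ through the isomorphism $S$ using (A3), and close a contraction in the $X$-norm on a $Y$-bounded set using the Lipschitz bounds in (A4). The identification of the main technical obstacle---transferring $U_v(t,s)$ from $X$ to $Y$ by conjugation and controlling the perturbation $B(v)$---is accurate.

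That said, the paper does not prove this lemma at all. It is stated with an explicit citation to \cite{KatoI} and used as a black box; the authors' work consists entirely of verifying (A1)--(A4) for the specific operators $A(u)=-2u\partial_x$ and $f(u)=\Lambda^{-2}\partial_x(u^2+(u^2)_x)$ in the scale $X=H^{s-1}(\mathbb S)$, $Y=H^s(\mathbb S)$, $S=\Lambda$. So there is no ``paper's own proof'' to compare against: your proposal supplies the argument behind the cited result rather than reproducing anything the authors wrote.
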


\section{Proof of theorem \ref{thm1.2}}\label{sec3}

We begin by noticing that the equation in (\ref{1.0.5}) is in the quasi-linear equation form (\ref{qlee}), where
\bb \label{ql}
 A(u) = -2u\partial_x
\ee
and 
\bb \label{nl}
 f(u) =  \Lambda^{-2} \partial_x \big( u^2 + (u^2)_x\big) .
\ee

Let 
$$(\Lambda^sf)(k):=\sum_{k\in\mathbb{Z}}(1+n^2)^{s/2}\hat{f}(n)e^{ink}.$$

For any $s,s'\in\R$, $\Lambda^s:H^{s'}(\s)\rightarrow H^{s'-s}(\s)$ is an isomorphism \cite[page 330]{taylor}. Then, it is natural to choose as Hilbert spaces $X\coloneqq (H^{s-1}(\mathbb{S}), \|\cdot \|_{s-1})$ and $Y\coloneqq (H^{s}(\mathbb{S}),\|\cdot \|_s)$ with $s>\frac{3}{2}$, and $S=\Lambda$ as well, and work with them using Kato's approach. We aim at proving lemmas ensuring the validity of the assumptions (A1)-(A4). For convenience, in the remaining part of this section we simply write $H^s$ in place of $H^s(\s)$. Moreover, for a given function $g\in H^r$ with $r>1/2$ let us denote by $M_g$ the corresponding multiplication operator on $H^r$, i.e.~$M_g \colon H^r \to H^r,  w\mapsto g w$. Since $H^r$, $r>1/2$, is closed under multiplication, $M_g$ is continuous.

Now, we verify the assumptions needed for Theorem \ref{thm1.2}. We start with assumption (A1):
  \begin{lemma} \label{opA}
Let $s>3/2$. For any given $r>0$, it holds that for all \mbox{$u \in  \mathrm B_r(0) \subseteq H^{s}$}, the linear operator $A(u)\colon H^{s-1} \to H^{s-1}$, with domain $D(A(u))\coloneqq \{ w \in H^{s-1}\colon A(u) w \in H^{s-1} \}$,
generates a strongly continuous semigroup $T_u(t)$ in $X$ which satisfies
$\| T_u(t) \|_{\mathcal L(X)} \leq \mathrm e^{\omega_r t}$ for all $t\in [0,\infty)$, for a uniform constant $\omega_r > 0$. In particular, the operator $A(u)$ given in \eqref{ql}, with domain
$\mathcal{D}(A)=\{\omega\in H^{s-1}:A(u)\omega\in H^{s-1}\}\subset H^{s-1}$
is quasi-m-accreative in $H^{s-1} $if $u\in H^{s}$, $s>\frac{3}{2}$.
\end{lemma}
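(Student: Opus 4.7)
The strategy is to invoke the Lumer--Phillips theorem: once we prove that $A(u)$ is quasi-$m$-accretive in $X = H^{s-1}(\mathbb{S})$ with a quasi-accretivity constant $\omega_r$ depending only on $r = \sup\{\|u\|_s\}$, then $-A(u)$ generates a strongly continuous semigroup $T_u(t)$ on $X$ satisfying the bound $\|T_u(t)\|_{\mathcal{L}(X)} \le e^{\omega_r t}$. So the two things I need to establish are: (i) a dissipativity estimate $\langle A(u)w, w\rangle_{s-1} \ge -\omega_r \|w\|_{s-1}^2$ on a core, and (ii) the range condition $\mathrm{Range}(\lambda I + A(u)) = H^{s-1}$ for all sufficiently large $\lambda > 0$.

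For (i), I would take $w$ in the dense subspace $H^s \subset H^{s-1}$ so that all expressions are classical, apply the isomorphism $\Lambda^{s-1}\colon H^{s-1}\to L^2$, and compute in the $L^2$ pairing:
\[
\langle A(u)w, w\rangle_{s-1} = -2\bigl\langle u\,\Lambda^{s-1} w_x,\, \Lambda^{s-1}w\bigr\rangle_{L^2} - 2\bigl\langle [\Lambda^{s-1}, M_u]\,w_x,\, \Lambda^{s-1}w\bigr\rangle_{L^2}.
\]
The first piece is handled by integration by parts on $\mathbb{S}$ (no boundary terms arise by periodicity), producing $\int_{\mathbb{S}} u_x\,(\Lambda^{s-1}w)^2\,dx$, which is bounded by $\|u_x\|_\infty \,\|w\|_{s-1}^2$; the Sobolev Lemma \ref{lemma2.3} gives $\|u_x\|_\infty \le c\|u\|_s$ because $s-1 > 1/2$. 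The second piece is controlled by the Kato--Ponce commutator estimate in the sharper form
\[
\bigl\|[\Lambda^{s-1}, M_u]\,w_x\bigr\|_{L^2} \;\le\; c\,\|u\|_s\,\|w\|_{s-1},
\]
valid for $s > 3/2$, whose proof exploits the fact that $u$ carries one derivative more than $w$. Combining these, $|\langle A(u)w, w\rangle_{s-1}| \le c\,\|u\|_s\,\|w\|_{s-1}^2 \le cr\,\|w\|_{s-1}^2$, yielding the quasi-accretivity constant $\omega_r := cr$.

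For (ii), I would solve the resolvent equation $\lambda w - 2u\,w_x = f$ for $f \in H^{s-1}$ and large $\lambda$. Because $u \in H^s \hookrightarrow \C^1$, this is a first-order linear equation with continuous coefficients, solvable via a standard regularisation scheme: mollify $u$ to obtain $u^\eps \in \C^\infty$, solve the smooth problem by the method of characteristics to produce $w^\eps \in H^{s-1}$, derive an $H^{s-1}$ a priori bound $\|w^\eps\|_{s-1} \le (\lambda - \omega_r)^{-1}\|f\|_{s-1}$ from the same commutator analysis used in (i), and pass to the limit $\eps \to 0$ by weak compactness. The uniform estimate identifies the weak limit with a solution $w \in D(A(u))$. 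Closedness of $A(u)$ (which also follows from the commutator estimates) then gives surjectivity for $\lambda > \omega_r$.

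The main obstacle is the commutator estimate in the borderline range $3/2 < s \le 5/2$: the naive form of Kato--Ponce produces a term $\|u\|_{s-1}\,\|w_x\|_\infty$ in which $\|w_x\|_\infty$ is not controlled by $\|w\|_{s-1}$ when $s-1 < 3/2$. The remedy is to use the refined commutator bound stated above, which distributes the derivatives so that the function with greater Sobolev regularity, namely $u$, absorbs the excess derivative. Everything else -- integration by parts, the Lumer--Phillips deduction, and the resolvent construction -- is then routine once this estimate is in hand.
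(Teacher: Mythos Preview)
Your dissipativity estimate in (i) is essentially identical to the paper's computation: both decompose $(A(u)w,w)_{s-1}$ via $\Lambda^{s-1}$ into an integration-by-parts term controlled by $\|u_x\|_\infty$ and a commutator term bounded via the sharp estimate $\|[\Lambda^{s-1},M_u]\partial_x w\|_0 \le C\|u\|_s\|w\|_{s-1}$. One point you leave implicit is that the estimate, proved first for $w$ in a dense subspace, must hold on all of $D(A(u))$; the paper handles this by proving separately that $C^\infty(\mathbb{S})$ is a \emph{core} for $A(u)$ in $H^{s-1}$ (its Lemma~\ref{L-core}), and you would need the analogous step.

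Where the two arguments genuinely diverge is the range condition (ii). The paper does not solve the resolvent equation directly in $H^{s-1}$. Instead it first establishes quasi-$m$-accretivity of $A(u)=u\partial_x$ in $L^2$, obtaining surjectivity there by a short adjoint/density argument (if $((A+\lambda I)w,z)_0=0$ for all $w$, one shows $z=0$). It then sets $Q=\Lambda^{s-1}$, observes that $B_1(u):=QA(u)Q^{-1}-A(u)=[\Lambda^{s-1},u]\Lambda^{1-s}\partial_x$ is bounded on $L^2$ by the same commutator estimate, invokes a bounded-perturbation theorem to conclude that $QA(u)Q^{-1}$ generates a $C_0$-semigroup on $L^2$, and finally applies the admissibility lemma (Pazy) to transport generation to $H^{s-1}$. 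This bootstrap avoids any direct analysis of the ODE $\lambda w-2u\,w_x=f$.

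Your direct approach via mollification and characteristics can be made to work, but the sketch elides a genuine difficulty: on $\mathbb{S}$ the coefficient $u$ (or $u^\eps$) may vanish and change sign, so the characteristic vector field $-2u^\eps\partial_x$ has equilibria and the stationary resolvent equation is degenerate there; producing a periodic $H^{s-1}$ solution is not immediate from the method of characteristics alone. The paper's $L^2$-to-$H^{s-1}$ lift sidesteps this issue entirely, which is the main practical advantage of its route.
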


For convenience, it would be good to mention that the coefficient in \eqref{ql} does not affect the analysis. It just plays a role in constant estimation which is not of our interest. Therefore, we will neglect it and keep the operator form as $u\partial_x$. We prove this lemma in two steps.  First step is given as follows:

\begin{lemma}\label{pre}
The operator $A(u)= u\partial_x$ in $L^2$, with $u\in H^s$, $s>\frac{3}{2}$, is\\ \mbox{quasi-m-accreative}.
\end{lemma}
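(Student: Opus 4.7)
The plan is to verify the two defining conditions of quasi-m-accretivity for the operator $A(u)=u\partial_x$ acting in $L^2(\s)$ with domain $\mathcal D(A(u))=\{v\in L^2(\s):u v_x\in L^2(\s)\}$: (i) existence of $\omega\geq 0$, depending only on $\|u\|_s$, such that $A(u)+\omega I$ is accretive; (ii) the range condition $\text{Rg}(A(u)+\omega I+\lambda I)=L^2(\s)$ for some $\lambda>0$. Together these are equivalent to $A(u)$ being quasi-m-accretive.

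For accretivity I first observe that $\C^\infty[0,1]$ is a dense subset of $\mathcal D(A(u))$, and on this core I compute, via integration by parts on the circle (no boundary contribution),
\begin{equation*}
\langle A(u)v,v\rangle_{L^2}=\int_0^1 u v_x v\,dx=-\tfrac12\int_0^1 u_x v^2\,dx.
\end{equation*}
Since $s>3/2$ implies $s-1>1/2$, Lemma \ref{lemma2.3} yields $u_x\in L^\infty$ with $\|u_x\|_\infty\leq c\|u\|_s$, whence $|\langle A(u)v,v\rangle_{L^2}|\leq\tfrac{c}{2}\|u\|_s\|v\|_0^2$. Choosing $\omega=\tfrac{c}{2}\|u\|_s$ makes $A(u)+\omega I$ accretive on the core, and hence on all of $\mathcal D(A(u))$ by density.

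The range condition I would handle via the $L^2$-adjoint. Since $u,u_x\in L^\infty$ by Lemma \ref{lemma2.3}, the formal adjoint $A(u)^\ast=-\partial_x\circ M_u=-u\partial_x-M_{u_x}$ is a bona fide densely defined operator, and the same integration-by-parts identity gives $\langle A(u)^\ast v,v\rangle_{L^2}=-\tfrac12\int_0^1 u_x v^2\,dx$. Thus $A(u)^\ast+\omega I$ is also accretive with the same shift. The accretivity inequality $\lambda\|v\|_0\leq\|(A(u)+\omega I+\lambda I)v\|_0$ forces $\text{Rg}(A(u)+\omega I+\lambda I)$ to be closed for every $\lambda>0$, while the same estimate applied to the adjoint yields $\ker(A(u)^\ast+\omega I+\lambda I)=\{0\}$, so the range is dense. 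Combining these gives $\text{Rg}(A(u)+\omega I+\lambda I)=L^2(\s)$, which establishes the range condition and completes quasi-m-accretivity.

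The main obstacle I expect is the range condition. A naive approach, solving the transport equation $u v_x+(\omega+\lambda)v=f$ pointwise by characteristics, fails wherever $u$ vanishes, and $u\in H^s$ does not preclude zeros. The adjoint route above circumvents this obstruction, relying only on the $L^\infty$ bound on $u_x$ together with the Hilbert-space structure. A secondary technical point is extending the accretivity identity from the smooth core to all of $\mathcal D(A(u))$; this follows from mollification in $v$ combined with $u\in L^\infty$, which justifies passing to the limit in the relevant integrals. An alternative I would keep in reserve is to regularise $u$ by $u_n\in\C^\infty[0,1]$, for which $A(u_n)+(\omega+\lambda)I$ is invertible directly by characteristics, and then pass to the limit using uniform resolvent bounds coming from the accretivity estimate; but this is less clean than the adjoint argument.
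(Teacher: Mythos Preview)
Your approach is essentially the paper's: accretivity from the integration-by-parts identity $(u\partial_x v,v)_0=-\tfrac12(u_x,v^2)_0$ controlled by $\|u_x\|_\infty\lesssim\|u\|_s$, and the range condition via triviality of $\ker(A^\ast+\omega+\lambda)$ obtained from the same identity applied to $A^\ast=-u\partial_x-M_{u_x}$. The one ingredient you omit but the paper supplies explicitly is that $A(u)$ is \emph{closed} in $L^2$: the accretivity inequality $\lambda\|v\|_0\leq\|(A+\omega+\lambda)v\|_0$ alone does not force the range to be closed, since from $(A+\omega+\lambda)v_n\to y$ you get $v_n\to v$ and $Av_n\to y-(\omega+\lambda)v$, but concluding $v\in D(A)$ and $Av=y-(\omega+\lambda)v$ is exactly closedness. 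The paper checks this using $u,u_x\in L^\infty$ together with the equivalent description $D(A)=\{v\in L^2:\,uv\in H^1\}$; once you insert that short verification your argument is complete and coincides with the paper's.
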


\begin{proof}
A linear operator $A=A(u)$ in $X$ is \mbox{quasi-m-accretive} if and only if \cite{KatoII}:
 \begin{enumerate}
  \item [(a)] There is a real number $\beta$ such that $(A\omega,\omega)_X\geq -\beta\|\omega\|_X^2$ for all $\omega\in D(A)$;
  \item [(b)] The range of $A(u)+\lambda I$ is all of $X$ for some (or equivalently, all) $\lambda>\beta$.
 \end{enumerate}

Note that if the above property (a) holds, then $A+\lambda I$ is dissipative for all $\lambda>\beta$. Moreover, if $A$ is a closed operator, then $A+\lambda I$ has closed range in $X$ for all $\lambda>\beta$. Hence, in order to prove (b) in such a case, it is enough to show that $A+\lambda I$ has dense range in $X$ for all $\lambda>\beta$.  

First we show that $A$ is a closed operator in $L^2$.
Let $(v_n)_{n\in\N}$ be a sequence in $D(A)$ with $v_n\to v$ in $L^2$ and $A v_n \to w$ in $L^2$. Then $u v_n \in H^{1}$ for all $n\in \N$ by definition of $D(A)$ since an alternative way of writing the domain is \mbox{$D(A)=\{\omega\in L^{2}:u\omega\in H^{1}\}$} and $v_n\in D(A)$. Moreover, both $u v_n \to u v$ and $u_x v_n \to u_x v$ in $L^2$ by the continuity of the multiplication $H^r \times L^2 \to L^2$ for $r>1/2$. Therefore,$(u v_n)_x \to w+u_x v$ in $L^2$. Having sequences $(uv_n)_{n\in\mathbb{N}}$ and $((u v_n)_x)_{n\in\mathbb{N}}$ convergent in $L^2$ implies that $(uv_n)_{n\in\mathbb{N}}$ converges in $H^1$
with the limit $ uv$, thus $v\in D(A)$. Moreover the continuity of $\partial_x\colon H^1 \to L^2$ implies that $\lim_{n\to\infty} (u v_n)_x = (u v)_x $, therefore $w=(uv)_x-u_x v = A v$.

Now, we take the following $L^2$ inner product
\begin{align}
 (A(u)\omega,\omega)_{0} &=(u\partial_x\omega ,\omega)_{0} \nonumber
\end{align}
We refer to Lemma \ref{L-core} to be stated below and use integration by parts to get:
\begin{align*}
|(u\partial_x\omega ,\omega)_{0}|
=|-\frac{1}{2}(u_x, \omega^2)_0|\leq C \|u_x\|_{L^\infty}\|w\|_{0}^2\leq \tilde{C}\|\omega\|_{0}^2.
\end{align*}

Having $\|u\|_s$ bounded allows us to choose $\beta=\tilde{C}(\|u\|_{H^{s}})$ and to show that the operator satisfies the inequality in (a). Thus, $A(u)+\lambda I$ is dissipative for all $\lambda>\beta$. Moreover, recall that $A(u)$ is a closed operator. Therefore, we now show that $A(u)+\lambda I$ has dense range in $L^2$ for all $\lambda>\beta$. 

It is known that if the adjoint of an operator has trivial kernel, then the operator has dense range \cite{RS}. For $A(u)=u\partial_x $, the adjoint operator can be expressed $A^*(u)=-u_x-u\partial_x$.

Observe that 
\begin{displaymath}
A^*(u)\omega=-u_x\omega-u\omega_x=-(u\omega)_x.
\end{displaymath}
Since $u_x\in L^{\infty}$ and $\omega\in L^2$, we have $u_x\omega\in L^2$. Having also $A(u)\omega=u\omega_x\in L^2$ for $\omega\in D(A)$ reveals that
$\mathcal{D}(A^*)=\{\omega\in L^2:A^{*}(u)\omega\in L^2\}$.

Assume that $A(u)+\lambda I$ does not have a dense range in $L^2$. Then, there exists $0\neq z\in L^2$
such that $((A(u)+\lambda I)\omega,z)_0=0$ for all $\omega\in \mathcal{D}(A)$. Since $H^1\subset \mathcal{D}(A)$, $\mathcal{D}(A)=\mathcal{D}(A^*)$ is dense in $L^2$. It means that there exists a sequence $z_k\in \mathcal{D}(A^*)$ such that it converges to an element $z\in L^2$. Recall that $D(A^*)$ is closed. So, $z\in \mathcal{D}(A^*)$. Moreover,
\begin{equation*}
((A(u)+\lambda I)\omega,z)_{0}=(\omega,(A(u)+\lambda I)^*z)_{0}=0
\end{equation*}
reveals that $(A^*(u)+\lambda I) z=0$ in $L^2$. Multiplying by $z$ and integrating by parts, we get
\begin{equation*}
0=((A^*(u)+\lambda I)z,z)_{0}=(\lambda z,z)_{0}+(z,A(u)z)_{0}\geq (\lambda-\beta)\|z\|_{0}^2~~~\forall \lambda>\beta
\end{equation*}
and thus, $z=0$, which contradicts our assumption. It completes the proof of (b). Therefore, the operator $A(u)$ is quasi-m-accreative.
\end{proof}

\noindent
In the proof of Lemma \ref{opA} we use the fact that $C^{\infty}(\mathbb{S})$ is a \emph{core} for $A$ in $H^{s-1}$, i.e. $A(u)v$ can be approximated by smooth functions in $H^{s-1}$  (\cite{CE2}): 
\begin{lemma}
\label{L-core}
    Given $v\in D(A)$ there exists a sequence $(v_n)_{n\in \mathbb{N}}$ in $\mathcal C^{\infty}$ such that both $v_n \to v$ and $A v_n \to Av$ in $H^{s-1}$.
\end{lemma}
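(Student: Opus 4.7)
The plan is to construct the approximating sequence via periodic Friedrichs mollification. Let $\{J_n\}_{n\in\mathbb{N}}$ denote the Fourier truncation operators
\begin{equation*}
J_n w := \sum_{|k|\leq n} \hat{w}(k)\,e^{2\pi i k x},
\end{equation*}
so that each $J_n w$ is a trigonometric polynomial and hence lies in $C^\infty(\mathbb{S})$. I would take $v_n := J_n v$ as the candidate sequence. Convergence $v_n \to v$ in $H^{s-1}$ is immediate from Parseval: since $v\in D(A)\subseteq H^{s-1}$, one has $\|v - J_n v\|_{s-1}^2 = \sum_{|k|>n}(1+k^2)^{s-1}|\hat{v}(k)|^2 \to 0$.

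The substantive task is to show $Av_n \to Av$ in $H^{s-1}$. A direct estimate of $u\,\partial_x(v_n - v)$ is not available, because $\partial_x v$ lies only in $H^{s-2}$, which falls below the Banach algebra threshold when $s$ is close to $3/2$. The standard remedy exploits that $J_n$ commutes with $\partial_x$: writing
\begin{equation*}
A v_n = u\,\partial_x J_n v = J_n(u\,\partial_x v) + [M_u, J_n]\,\partial_x v = J_n(Av) + [M_u, J_n]\,\partial_x v,
\end{equation*}
where $M_u$ denotes multiplication by $u$, and noting that $Av\in H^{s-1}$ by the definition of $D(A)$, Parseval again yields $J_n(Av) \to Av$ in $H^{s-1}$. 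It therefore suffices to establish
\begin{equation*}
\|[M_u, J_n]\,\partial_x v\|_{s-1} \longrightarrow 0 \quad \text{as } n\to\infty.
\end{equation*}

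This is a version of Friedrichs' commutator lemma adapted to Fourier multipliers on $\mathbb{S}$: for $u\in H^s$ with $s>3/2$ and $w\in H^{s-2}$ one has both the uniform bound $\|[M_u, J_n] w\|_{s-1} \leq C\,\|u\|_s\,\|w\|_{s-2}$ (using the algebra property of $H^{s-1}$, valid since $s-1>1/2$, together with the uniform $H^r\!\to\!H^r$ boundedness of $J_n$) and the vanishing limit $\|[M_u, J_n] w\|_{s-1}\to 0$. The convergence is first verified for smooth $u$, where the commutator reduces to frequency pieces supported near $|k|\sim n$ that decay via the rapid decay of $\hat{u}$, and then extended to general $u\in H^s$ by density together with the uniform bound. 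Applying this with $w=\partial_x v \in H^{s-2}$ finishes the argument.

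The principal obstacle is the commutator lemma itself; everything else reduces to routine mollifier convergence and bookkeeping on the Fourier side. Should the Fourier truncation formulation be inconvenient, one may equivalently use convolution mollifiers $J_\varepsilon v = \rho_\varepsilon \ast v$ built from a smooth periodic kernel, for which the same decomposition and commutator estimate apply verbatim.
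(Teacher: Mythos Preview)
Your decomposition $Av_n = J_n(Av) + [M_u,J_n]\,\partial_x v$ is exactly the structure the paper uses; their operator $P_n v := u(\rho_n*v)_x - \rho_n*(u v_x)$ is this same commutator written for convolution mollifiers. The gap is in the commutator estimate for the sharp truncation $J_n$. The uniform bound $\|[M_u,J_n]w\|_{s-1}\leq C\,\|u\|_s\,\|w\|_{s-2}$ with $C$ independent of $n$ is \emph{false}: with $u=e^{2\pi i x}$ and $w=e^{2\pi i n x}$ one has $J_n w=w$ while $J_n(uw)=0$, so $[M_u,J_n]w=e^{2\pi i(n+1)x}$ and the ratio $\|[M_u,J_n]w\|_{s-1}/\|w\|_{s-2}\sim n\to\infty$. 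Your proposed justification---the algebra property of $H^{s-1}$ together with the uniform $H^r$-boundedness of $J_n$---could never yield a step up from $H^{s-2}$ to $H^{s-1}$, since neither $uJ_nw$ nor $J_n(uw)$ is individually controlled in $H^{s-1}$ for $w\in H^{s-2}$. Without the uniform bound the density reduction you sketch collapses, and the same boundary-frequency effect spoils the ``rapid decay of $\hat u$'' heuristic for smooth $u$: the contribution at $k=n+1$ involves $\hat u(1)\hat w(n)$, with no smallness coming from $\hat u$.

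Your closing remark that convolution mollifiers $v_n=\rho_n*v$ work equally well is correct, and that is precisely the paper's route; but even there the commutator bound is not a corollary of algebra properties. The paper proves $\|P_n v\|_{s-1}\leq K\|v\|_{s-1}$ by writing $P_n v$ as a kernel integral and applying the mean value theorem to $u(x)-u(x-y)$, which produces a factor $\|u_x\|_{L^\infty}$ and delivers the derivative gain precisely because the mollifier kernel concentrates near $y=0$---a feature the Dirichlet kernel lacks. With that bound in hand the paper runs the density argument in $v$ (not in $u$). This kernel-side estimate is the real content of the lemma and must be supplied explicitly.
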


\begin{proof}
Let $v\in D(A)$ and fix $\rho\in C_c^\infty$, where $C_c^\infty$ denotes the set of $C^\infty$ functions with compact support, with $\rho \geq 0$ and $\int_{\R} \rho =1$. Given $n\geq 1$, let $\rho_n= n\rho(nx)$. If we set $v_n\coloneqq \rho_n * v$, then $v_n \in C_c^\infty$ for $n\geq 1$ and $v_n \to v$ in $H_p^{s-1}$.
We have to prove that $(uv_n)_x \to (uv)_x$ in $H^{s-1}$. Since $v\in D(A)$, we have that $uv_x\in H^{s-1}$ and hence 
 $\rho_n *(uv_x)\rightarrow uv_x$. Moreover, since $uv_n\in H^s$ it follows that $(uv_n)_x = u_x v_n + u(v_n)_x \in H^{s-1}$,  hence we have that  $u_xv_n\rightarrow u_x v$ in $H^{s-1}$. Therefore
\begin{align*}
    (uv_n)_x-(uv)_x&=u_xv_n-u_xv +\rho_n *(uv_x)-uv_x + u(v_n)_x - \rho_n*(uv_x)
\end{align*}
holds true and it suffices to show that $ u(v_n)_x-\rho_n*(uv_x) \to 0$ in $H^{s-1}$. To this end, denote 
$$P_n v \coloneqq u(v_n)_x-\rho_n*(uv_x), \quad  n\geq 1.$$ 
We will show   that there exists $K>0$ independent of $v$ such that 
\bb
\label{eq-Pn} 
\|P_n v \|_{s-1} \leq K\|v\|_{s-1}, \quad n\geq 1. 
\ee
That will enable us to conclude that $P_n$ is uniformly bounded in $H^{s-1}$ by the uniform boundedness principle. When we approximate $v$ in $H^{s-1}$ by smooth functions, and use this conclusion, we will be able to prove the assertion $P_n\to 0$ for $v\in C_c^\infty$. Since the set of smooth functions is dense in $H^{s-1}$ and $P_n$ are uniformly bounded, the proof will be completed. 

We first notice that 
\begin{align*}
P_n v(x) &= \int_{\R}(\rho_n)_y(y) (u(x) - u(x-y))v(x-y) dy +(\rho_n*(u_x v))(x)\\
             &= n^2\int_{\R} \rho_y(ny) (u(x)-u(x-y)) v(x-y)dy+(\rho_n*(u_x v))(x)\\
             &= n\int_{-1}^1 \rho_y(y) (u(x)-u(x-\frac{y}{n})) v(x-\frac{y}{n})dy+(\rho_n*(u_x v))(x),
\end{align*}
where $\rm supp(\rho)\subset [-1,1]$. Moreover, using the mean value theorem, we obtain the estimate 
\begin{align*}
&\Big|n^2\int_{\R} \rho_y(ny) (u(x)-u(x-y)) v(x-y)dy\Big|= \Big|n^2\int_{\R} \rho_y(ny) u_x(x_0) y v(x-y)dy\Big|\\
        &= \Big|\int_{-1}^1\rho_y(y) u_x(x_0) y v(x-y)dy\Big|\leq \|u_x\|_{L^\infty}\int_{-1}^1 |\rho_y(y)|\,|y|\,|v(x-\frac{y}{n})|dy,
\end{align*}
for some $x_0\in (x,x-y)$. Let now  $C\coloneqq \sup_{x\in\mathbb{R}}\|u_x\|_{L^\infty}^2\int_{-1}^1 |\rho_y(y)y|^2dy$. Then the Cauchy-Schwarz inequality, Fubini's theorem and the fact that the operator $\Lambda^{s-1}$ commutes with integration yield that
\begin{align*}
&\|n^2\int_{\R} \rho_y(ny) (u(x)-u(x-y)) v(x-y)dy\|^2_{s-1}\\
&=\|\Lambda^{s-1}\int_{-1}^1 \rho_y(y) u_x(x_0)y (v(x-\frac{y}{n}))dy\|^2_{2}\\
&=\int_{\R} \left|\int_{-1}^1 \rho_y(y) u_x(x_0)y \Lambda^{s-1}v(x-\frac{y}{n})dy\right|^2 dx\\
&\leq C \int_{-1}^1\int |\Lambda^{s-1}v(x-\frac{y}{n})|^2 dx dy\leq 2C\|v\|_{s-1}.
\end{align*}
Moreover, we obtain by Plancherel's theorem that 
\begin{align*}
\|\rho_n*(u_x v)\|_{s-1} &= \|\Lambda^{s-1}(\rho_n*(u_x v))\|_{2} 
        = \|\rho_n*\Lambda^{s-1}(u_x v))\|_{2}
    &\leq     \|\Lambda^{s-1}(u_x v)\|_2\\
    &\leq  \|u_x\|_{L^\infty}\|v\|_{s-1}.
\end{align*}
Therefore we conclude that
\begin{equation}\label{bound}
\|P_n v\|_{s-1}\leq (\sqrt{2C}+\|u_x\|_{L^\infty})\,\|v\|_{s-1},~~n\geq 1.
\end{equation}
For $K=\sqrt{2C}+\|u_x\|_{L^\infty}$ in (\ref{eq-Pn}), proof is completed by the estimate (\ref{bound}).
\end{proof}

Second step to prove Lemma \ref{opA} is making use of the following lemma proved in \cite{Pazy}:

\begin{lemma}\label{adm}
Let $X$ and $Y$ be two Banach spaces such that $Y$ is continuously and densely embedded in $X$. Let $-A$ be the infinitesimal generator of the $C_0$-semigroup $T(t)$ on $X$ and let $Q$ be an isomorphism from $Y$ onto $X$. Then $Y$ is $-A$-admissible (i.e. $T(t)Y\subset Y$ for all $t\geq 0$, and the restriction of $T(t)$ to $Y$ is a $C_0$-semigroup on $Y$) if and only if $-A_1=-QAQ^{-1}$ is the infinitesimal generator of the $C_0$-semigroup $T_1(t)=QT(t)Q^{-1}$ on $X$. Moreover, if $Y$ is $-A$-admissible, then the part of $-A$ in $Y$ is the infinitesimal generator of the restriction $T(t)$ to $Y$.  
\end{lemma}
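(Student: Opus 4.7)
The plan is to establish the stated equivalence by direct algebraic manipulation of the conjugation identity $T_1(t)=QT(t)Q^{-1}$ on $X$, keeping careful track of which limits live in $X$ and which live in $Y$, and then to identify the generator of the restricted semigroup on $Y$ by a tautological calculation. Throughout, the key structural fact is that $Q\colon Y\to X$ is a bijective bounded linear map with bounded inverse, so $Q$ and $Q^{-1}$ freely transfer convergence (and domains of generators) between $X$ and $Y$.

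First I would handle the ($\Leftarrow$) direction. Assume $-A_1=-QAQ^{-1}$ generates the $C_0$-semigroup $T_1(t)=QT(t)Q^{-1}$ on $X$. The very identity $T_1(t)=QT(t)Q^{-1}$ forces $T(t)Y\subseteq Y$: indeed, for arbitrary $x\in X$, the vector $T(t)Q^{-1}x=Q^{-1}T_1(t)x$ lies in $Q^{-1}(X)=Y$, and since $Q^{-1}\colon X\to Y$ is surjective this means $T(t)y\in Y$ for every $y\in Y$. The restriction $T(t)|_Y=Q^{-1}T_1(t)Q$ is then a composition of bounded maps, so it inherits from $T_1$ both the semigroup property and strong continuity in the $Y$-topology, giving $-A$-admissibility of $Y$.

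For the ($\Rightarrow$) direction, assume $Y$ is $-A$-admissible. Then $T_1(t):=QT(t)Q^{-1}$ is well-defined on $X$, because for any $x\in X$ we have $Q^{-1}x\in Y$ and hence $T(t)Q^{-1}x\in Y$ by admissibility. The semigroup law $T_1(t+s)=T_1(t)T_1(s)$ follows by inserting $Q^{-1}Q=\mathrm{id}_Y$ between $T(t)$ and $T(s)$, and strong continuity on $X$ follows from the estimate
\[
\|T_1(t)x-x\|_X=\|Q(T(t)Q^{-1}x-Q^{-1}x)\|_X\leq \|Q\|_{\mathcal L(Y,X)}\,\|T(t)(Q^{-1}x)-Q^{-1}x\|_Y,
\]
which tends to $0$ as $t\to 0^+$ by the already-established strong continuity of $T(t)|_Y$ on $Y$. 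To identify the generator, for $x\in X$ with $Q^{-1}x$ in the domain of the generator of $T(t)|_Y$, I would pull $Q$ out of the difference quotient and obtain $\lim_{t\to 0^+}t^{-1}(T_1(t)x-x)=-QAQ^{-1}x$, from which one reads off that the generator of $T_1$ is exactly $-QAQ^{-1}$ with the natural domain $\{x\in X:Q^{-1}x\in D(A|_Y)\}$.

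For the moreover clause, the part of $-A$ in $Y$, defined on $\{y\in D(A)\cap Y:Ay\in Y\}$ by $y\mapsto -Ay$, is essentially forced to coincide with the generator of $T(t)|_Y$: the latter is by definition the operator whose difference quotient converges in the $Y$-norm, and when such a limit exists in $Y$ it certainly exists in $X$ (by the embedding $Y\hookrightarrow X$) and must agree with $-Ay$, so $y\in D(A)$ and $Ay\in Y$. The main obstacle I expect is not any single estimate but rather the careful bookkeeping of domains: one must repeatedly convert between limits in $X$ and limits in $Y$ using the bi-continuity of $Q$, and verify that the domain of the generator of $T_1$ on $X$ matches $Q$ applied to the domain of $A|_Y$, with no unintended widening or shrinking. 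Apart from this, every step reduces to an application of the identities $Q^{-1}Q=\mathrm{id}_Y$, $QQ^{-1}=\mathrm{id}_X$, and the continuity of both $Q$ and $Q^{-1}$.
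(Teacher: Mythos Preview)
The paper does not supply its own proof of this lemma; it is quoted verbatim from Pazy's book (the citation \texttt{[Pazy]} immediately preceding the statement), so there is no ``paper's approach'' to compare your argument against.

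Your sketch of the equivalence is sound. Both directions reduce to the identity $T(t)|_Y=Q^{-1}T_1(t)Q$ together with the bi-continuity of $Q$, exactly as you say, and the generator of $T_1$ is read off correctly as $-QBQ^{-1}$ with $D(-A_1)=Q\,D(B)$, where $B$ denotes the generator of $T(t)|_Y$.

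There is, however, a genuine gap in your treatment of the ``moreover'' clause. You establish only one inclusion: if the difference quotient $t^{-1}(T(t)y-y)$ converges in $Y$, then by the continuous embedding it converges in $X$ to $-Ay$, so $y\in D(A)$ and $Ay\in Y$. This gives $D(B)\subseteq\{y\in D(A)\cap Y:Ay\in Y\}$. The reverse inclusion does \emph{not} follow from domain bookkeeping alone: given $y\in D(A)\cap Y$ with $Ay\in Y$, you only know that $t^{-1}(T(t)y-y)\to -Ay$ in $X$, and convergence in $X$ does not by itself imply convergence in the stronger $Y$-norm. The standard way to close this gap is to use the integral representation
\[
T(t)y-y=-\int_0^t T(s)Ay\,ds,
\]
valid in $X$ for $y\in D(A)$. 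When $Ay\in Y$ and $s\mapsto T(s)|_Y$ is strongly continuous on $Y$ (which you have from admissibility), the right-hand side is also a Bochner integral in $Y$, and the embedding forces the two integrals to coincide. Dividing by $t$ and letting $t\to 0^+$ then gives convergence to $-Ay$ in $Y$, so $y\in D(B)$. Without this (or an equivalent resolvent argument), the identification of the part of $-A$ in $Y$ with the generator of $T(t)|_Y$ is incomplete.
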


Before we proceed with the proof of Lemma \ref{opA}, we give the commutator estimate which will be used:
\begin{lemma}[\cite{Taylor}]
\label{L-comm} 
Let $m> 0$, $s\geq 0$ and $3/2<s+m\leq \sigma$. Then for all $f\in H^{\sigma}$ and $g\in H^{s+m-1}$ one has
$
\|[\Lambda^{m},f]g\|_{s} \leq C\|f\|_{\sigma} \, \|g\|_{s+m-1},
$
where $C$ is a constant which is independent of $f$ and $g$.
\end{lemma}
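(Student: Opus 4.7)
The plan is to prove this Kato--Ponce type commutator estimate via direct Fourier analysis on $\mathbb{S}$. Setting $\langle k\rangle := (1+k^2)^{1/2}$ and $h := [\Lambda^m,f]g$, the periodic Fourier coefficients read
\begin{equation*}
\hat h(k) = \sum_{\ell\in\mathbb{Z}} \hat f(k-\ell)\,\hat g(\ell)\,\bigl[\langle k\rangle^m - \langle\ell\rangle^m\bigr],
\end{equation*}
so that controlling $\|h\|_s$ amounts to estimating a weighted $\ell^2$-norm of the right-hand side. The pivotal symbol estimate comes from applying the mean value theorem to $t\mapsto\langle t\rangle^m$: for $m\geq 1$ one obtains
\begin{equation*}
\bigl|\langle k\rangle^m - \langle\ell\rangle^m\bigr| \leq C\,|k-\ell|\,\bigl(\langle k\rangle^{m-1} + \langle\ell\rangle^{m-1}\bigr),
\end{equation*}
while for $0<m<1$ the sub-linear bound $|\langle k\rangle^m-\langle\ell\rangle^m|\leq|k-\ell|^m$ holds by concavity. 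The decisive feature is that the symbol loses only one derivative rather than the full $m$, which is exactly the commutator gain manifested in the difference between $\|g\|_{s+m-1}$ and $\|g\|_{s+m}$.

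Next I would split the $\ell$-sum into the low-high region $\langle k-\ell\rangle\leq\langle\ell\rangle$ and the high-low region $\langle k-\ell\rangle>\langle\ell\rangle$. In the low-high region, $\langle k\rangle\lesssim\langle\ell\rangle$, and combining the symbol estimate (for $m\geq 1$) with $\langle k\rangle^s\lesssim\langle\ell\rangle^s$ gives
\begin{equation*}
\langle k\rangle^s\,\bigl|\langle k\rangle^m-\langle\ell\rangle^m\bigr|\lesssim \langle k-\ell\rangle\,\langle\ell\rangle^{s+m-1}.
\end{equation*}
Setting $F(j):=\langle j\rangle|\hat f(j)|$ and $G(\ell):=\langle\ell\rangle^{s+m-1}|\hat g(\ell)|$, the low-high contribution to $\langle k\rangle^s|\hat h(k)|$ is dominated by $(F\ast G)(k)$. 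Since $\sigma\geq s+m>3/2$ forces $\sigma-1>1/2$, Cauchy--Schwarz against $\sum_j\langle j\rangle^{-2(\sigma-1)}<\infty$ yields $\|F\|_{\ell^1}\leq C\|f\|_\sigma$, and Young's inequality closes the bound: $\|F\ast G\|_{\ell^2}\leq\|F\|_{\ell^1}\|G\|_{\ell^2}\leq C\|f\|_\sigma\|g\|_{s+m-1}$.

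In the high-low region, $\langle k\rangle\lesssim\langle k-\ell\rangle$ and $\max(\langle k\rangle,\langle\ell\rangle)^{m-1}\lesssim\langle k-\ell\rangle^{m-1}$, whence
\begin{equation*}
\langle k\rangle^s\,\bigl|\langle k\rangle^m-\langle\ell\rangle^m\bigr|\lesssim \langle k-\ell\rangle^{s+m}\leq\langle k-\ell\rangle^\sigma.
\end{equation*}
Writing $\tilde F(j):=\langle j\rangle^\sigma|\hat f(j)|$ (of norm $\|f\|_\sigma$) and $\tilde G(\ell):=|\hat g(\ell)|$, the corresponding contribution is bounded by $(\tilde F\ast\tilde G)(k)$. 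The hypothesis $s+m-1>1/2$ again gives $\|\tilde G\|_{\ell^1}\leq C\|g\|_{s+m-1}$ by Cauchy--Schwarz, so Young's inequality closes this piece as well.

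The main technical obstacle I foresee is the sub-linear range $0<m<1$, where the MVT bound degenerates and only the Hölder bound $|k-\ell|^m$ is available; the $\langle\ell\rangle$-weight in the low-high region then has to be redistributed via a subadditive inequality such as $\langle\ell\rangle^{1-m}\leq\langle k\rangle^{1-m}+\langle k-\ell\rangle^{1-m}$ combined with $\langle k\rangle^s\lesssim\langle\ell\rangle^s+\langle k-\ell\rangle^s$, after which the same low-high/high-low dichotomy still delivers the estimate, with a constant depending only on $s$, $m$ and $\sigma$.
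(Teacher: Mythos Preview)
The paper does not prove this lemma: it is quoted from an external reference (Taylor, \emph{Commutator Estimates}, Proc.\ Amer.\ Math.\ Soc.\ \textbf{131} (2002)), so there is no in-paper argument to compare your attempt against. What you have written is a self-contained proof of a result the authors simply import.

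Your Fourier-side argument is the standard route to this Kato--Ponce estimate on the circle, and for $m\ge 1$ it is essentially complete: the mean-value symbol bound, the low-high/high-low split, and the two applications of Young's convolution inequality are all correct, and the two summability requirements $\sigma-1>1/2$ and $s+m-1>1/2$ match the hypotheses $\sigma\ge s+m>3/2$ exactly.

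For $0<m<1$ your outline is not yet a proof. The redistribution you propose, $\langle\ell\rangle^{1-m}\le\langle k\rangle^{1-m}+\langle k-\ell\rangle^{1-m}$, reintroduces a factor $\langle k\rangle^{1-m}$ that must itself be absorbed, and chasing it leads in circles. A cleaner fix keeps your dichotomy but uses two different symbol bounds. In the high-low region the concavity estimate $|\langle k\rangle^m-\langle\ell\rangle^m|\le\langle k-\ell\rangle^m$ already gives $\langle k\rangle^s\langle k-\ell\rangle^m\lesssim\langle k-\ell\rangle^{s+m}\le\langle k-\ell\rangle^\sigma$, closing exactly as before. In the low-high region, apply the mean value theorem to $r\mapsto r^m$ (rather than to $t\mapsto\langle t\rangle^m$) to obtain
\[
|\langle k\rangle^m-\langle\ell\rangle^m|\le m\,\bigl|\langle k\rangle-\langle\ell\rangle\bigr|\,\min(\langle k\rangle,\langle\ell\rangle)^{m-1}\le m\,\langle k-\ell\rangle\,\min(\langle k\rangle,\langle\ell\rangle)^{m-1};
\]
since here $\langle k\rangle\le 2\langle\ell\rangle$ and $s+m-1>0$, a one-line case split on which of $\langle k\rangle,\langle\ell\rangle$ is smaller gives $\langle k\rangle^s|\langle k\rangle^m-\langle\ell\rangle^m|\lesssim\langle k-\ell\rangle\langle\ell\rangle^{s+m-1}$, after which your $m\ge1$ argument goes through verbatim.
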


\noindent
\textbf{Proof of Lemma \ref{opA}}:
Following the arguments in the proof of Lemma \ref{pre}, we first take the following $H^{s-1}$ inner product
\begin{align}
 (A(u)\omega,\omega)_{s-1} &=(u\partial_x\omega ,\omega)_{s-1}=(\Lambda^{s-1}u\partial_x\omega ,\Lambda^{s-1}\omega)_{0} \nonumber \\
&=([\Lambda^{s-1},u]\partial_x\omega ,\Lambda^{s-1}\omega)_{0}+(u\partial_x\Lambda^{s-1}\omega ,\Lambda^{s-1}\omega)_{0} \label{est}.
\end{align}
Using Cauchy-Schwartz's inequality and Lemma \ref{L-comm} with $m=s-1$, $\sigma=s$, we get the following estimate for the first term of (\ref{est}):
\begin{align*}
|([\Lambda^{s-1},u]\partial_x\omega ,\Lambda^{s-1}\omega)_{0}|&\leq C\|u\|_s\|\partial_x \omega\|_{s-2}\|\omega\|_{s-1}\leq \tilde{C}\|\omega\|_{s-1}^2,
\end{align*}
for some constant $\tilde{C}$ depending on $\|u\|_s$.

For the second term of (\ref{est}), we again refer to Lemma \ref{L-core} and use integration by parts to get:
\begin{align*}
|(u\partial_x\Lambda^{s-1}\omega ,\Lambda^{s-1}\omega)_{0}|
=|-\frac{1}{2}(u_x, (\Lambda^{s-1}\omega)^2)_0|\leq C \|u_x\|_{L^\infty}\|w\|_{s-1}^2\leq \tilde{C}\|\omega\|_{s-1}^2.
\end{align*}
Choosing $\beta=\tilde{C}(\|u\|_{H^{s}})$, the operator satisfies the required inequality.

Moreover, let $Q:=\Lambda^{s-1}$ and notice that $Q$ is an isomorphism of $H^{s-1}$ to $L^2$ and $H^{s-1}$ is continuously and densely imbedded into $L^2$ as $s>\frac{3}{2}$. \\
Define 
\begin{align*}
    A_1(u)=QA(u)Q^{-1}=\Lambda^{s-1}A(u)\Lambda^{1-s}=\Lambda^{s-1}u\partial_x\Lambda^{1-s}=\Lambda^{s-1}u\Lambda^{1-s}\partial_x,
\end{align*}
and let $\omega\in L^2$ and $u\in H^s$, $s>\frac{5}{2}$. Then write $B_1(u)=A_1(u)-A(u)$ and consider the following estimate:
\begin{align*}
    \|B_1(u)\omega\|_0&=\|[\Lambda^{s-1}, A(u)]\Lambda^{1-s}\omega\|_0=\|[\Lambda^{s-1}, u]\Lambda^{1-s}\partial_x\omega\|_0\\
    &\leq C \|u\|_s\|\Lambda^{1-s}\partial_x\omega\|_{s-2}\leq C\|u\|_s\|\omega\|_0,
\end{align*}
where we applied Lemma \ref{L-comm} with $m=s-1$ and $\sigma=s$. Hence, we obtain $B_1(u)\in \mathcal{L}(L^2).$

Recall from Lemma \ref{pre} that $A(u)$ is quasi-m-accretive in $L^2$, i.e. $-A(u)$ is the infinitesimal generator of a $C_0$-semigroup on $L^2$. Thus, $A_1(u)=A(u)+B_1(u)$ is also the infinitesimal generator of a $C_0$-semigroup in $L^2$ by means of a perturbation theorem for semigroups (see \cite{Pazy}). Lemma \ref{adm} reveals that for $Y=H^{s-1}$, $X=L^2$ and $Q=\Lambda^{s-1}$, $H^{s-1}$ is $A$-admissible. Hence, $-A(u)$ is the infinitesimal generator of a $C_0$-semigroup on $H^{s-1}$. 
\qed

We continue with the proof of assumption (A2):

\begin{lemma}
$A$ maps $H^s$ into $\mathcal L(H^s,H^{s-1})$, more precisely the domain $D(A(u))$ contains $H^s$ and the restriction $A(u)|_{H^s}$ belongs to $\mathcal L(H^s,H^{s-1})$ for any $u\in H^{s}$. Furthermore $A$ is Lipschitz continuous in the sense that for all $r>0$ there exists a constant $C_1$ which only depends on $r$ such that
\begin{equation}\label{A_Lipschitz}
\| A(u) - A(v) \|_{\mathcal L(H^{s},H^{s-1})} \leq C_1 \, \|u-v\|_{s-1} 
\end{equation}
for all $u,~v$ inside $\mathrm B_r(0) \subseteq H^{s}$.
\end{lemma}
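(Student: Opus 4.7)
The plan is to exploit the algebra property of $H^{s-1}$ (Lemma \ref{lemma2.4}), which is available precisely because the standing hypothesis $s>3/2$ ensures $s-1>1/2$. For the coefficient $-2$ is irrelevant to the structure of the argument and will be absorbed into constants, the operator is effectively $A(u)w = u\,\partial_x w$.

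First I would establish the bound $A(u)\in\mathcal{L}(H^s,H^{s-1})$ and check that $H^s\subseteq D(A(u))$. Given $u\in H^s$ and $w\in H^s$, one has $u\in H^{s-1}$ with $\|u\|_{s-1}\leq\|u\|_s$ by Lemma \ref{lemma2.2}, and $\partial_x w\in H^{s-1}$ with $\|\partial_x w\|_{s-1}\leq\|w\|_s$. Since $s-1>1/2$, Lemma \ref{lemma2.4} yields
\begin{equation*}
\|A(u)w\|_{s-1}=\|u\,\partial_x w\|_{s-1}\leq c\,\|u\|_{s-1}\|\partial_x w\|_{s-1}\leq c\,\|u\|_s\|w\|_s,
\end{equation*}
where $c>0$ depends only on $s$. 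This immediately shows $A(u)w\in H^{s-1}$, so $w\in D(A(u))$ by the definition of the domain used in Lemma \ref{opA}, and $A(u)|_{H^s}\in\mathcal{L}(H^s,H^{s-1})$ with operator norm bounded by $c\,\|u\|_s$.

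For the Lipschitz estimate \eqref{A_Lipschitz}, I would simply exploit the linearity of $A$ in its argument: given $u,v\in\mathrm B_r(0)\subseteq H^s$, one has $A(u)-A(v)=(u-v)\partial_x$, and the same algebra-property computation gives, for every $w\in H^s$,
\begin{equation*}
\|(A(u)-A(v))w\|_{s-1}=\|(u-v)\,\partial_x w\|_{s-1}\leq c\,\|u-v\|_{s-1}\|w\|_s,
\end{equation*}
from which $\|A(u)-A(v)\|_{\mathcal{L}(H^s,H^{s-1})}\leq c\,\|u-v\|_{s-1}$ follows with a constant $C_1=c$ that does not even require a restriction to a ball (i.e.\ the map $A$ is globally Lipschitz on $H^s$ with values in $\mathcal L(H^s,H^{s-1})$).

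I do not expect any genuine obstacle here; the only subtlety is matching the norms that appear in assumption (A2) of Kato's framework, where $X=H^{s-1}$ plays the role of the bigger space, so the Lipschitz bound is required in the $X$-norm, i.e.\ in $\|\cdot\|_{s-1}$. This is exactly what the algebra estimate above delivers, because one factor ($u-v$) is measured in $H^{s-1}$ while the derivative factor ($\partial_x w$) absorbs one derivative from $H^s$ down to $H^{s-1}$.
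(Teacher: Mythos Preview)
Your proof is correct and follows essentially the same route as the paper: both arguments rest on the algebra property of $H^{s-1}$ (the paper phrases it as $M_u\in\mathcal L(H^{s-1})$, which is exactly Lemma~\ref{lemma2.4}) together with $\partial_x\in\mathcal L(H^s,H^{s-1})$, and then obtain the Lipschitz estimate from the identity $A(u)-A(v)=(u-v)\partial_x$ and the same product bound. Your observation that the resulting constant is in fact independent of $r$ is a harmless sharpening.
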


\begin{proof}
The operator  $A(u)|_{H^s}$ belongs to $\mathcal L(H^s,H^{s-1})$ for any $u\in H^s$, since $\partial_x \in \mathcal L(H^s,H^{s-1})$ and $M_{u}\in \mathcal L(H^{s-1})$.
To see that the required estimate is satisfied, let $u,v, w \in H^s$ be arbitrary. Then,

\begin{align*}
\|(A(u)-A(v))w\|_{s-1} &=\|(u - v ) \partial_x w\|_{s-1} \leq C\|u-v\|_{s-1} \, \|\partial_x w\|_{s-1} \\
&\leq C  \|u-v\|_{s-1} \, \| w\|_s , 
\end{align*}
where $C$ denotes a generic constant. This shows that for arbitrary $r>0$ one can always find a constant $C_1$ such that \eqref{A_Lipschitz} holds uniformly for all $u,v \in \mathrm B_r(0) \subseteq H^s$.
\end{proof}

The last two assumptions (A3)-(A4) are proved by the help of the following commutator and product estimates stated in \cite[Proposition B.10.(2)]{Lannes} and \cite[Lemma A1]{KatoI}, respectively:
\begin{lemma}\label{L_comm} 
Let $r>1/2$.	
	\begin{enumerate} 
		\item[(i)]  \label{com_est_2}
		If $-1/2 < t \leq r +1$, there exists a constant $C_{r,t}>0$ such that 
		\begin{displaymath}
		    		\big\|[\Lambda^t,M_g]h\big\|_0 \leq C_{r,t} \, \|g\|_{r+1} \, \|h\|_{t-1}
		\end{displaymath}

		for all $g\in H^{r +1}$ and $h\in H^{t-1}$.
		\item  [(ii)] \label{com_est_3}
		If $-r < t\leq r$, there exists a constant $C_{r,t}>0$ such that 
		$\|fg\|_t\leq C_{r,t} \, \|f\|_r \, \|g\|_t$,
		for all $f\in H^{r}$ and $g\in H^{t}$.
	\end{enumerate}
\end{lemma}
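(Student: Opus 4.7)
My plan is to prove both estimates by working on the Fourier side, where $\Lambda^t$ becomes multiplication by $\langle k\rangle^t := (1+|k|^2)^{t/2}$, and then exploiting Peetre-type inequalities together with Young's convolution inequality.

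I would start with part (ii) since it feeds into (i). Using the convolution formula $\widehat{fg}(k)=\sum_{j}\hat f(k-j)\hat g(j)$, I want to estimate
\[
\|fg\|_t^2=\sum_{k}\langle k\rangle^{2t}\Big|\sum_{j}\hat f(k-j)\hat g(j)\Big|^2.
\]
The key tool is the Peetre inequality $\langle k\rangle^{t}\leq C\,\langle k-j\rangle^{|t|}\langle j\rangle^{t}$, valid for all $t\in\mathbb{R}$. Splitting the sum in $j$ into the ``low-high'' region $\{|k-j|\leq |j|\}$ and the ``high-low'' region $\{|k-j|>|j|\}$, in each piece one transfers the weight $\langle k\rangle^{t}$ either onto $\hat g$ or $\hat f$ according to which frequency is largest. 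Because the hypothesis $-r<t\leq r$ (with $r>1/2$) ensures both $\sum_{j}\langle j\rangle^{-2r}<\infty$ and that the Sobolev norm $\|f\|_r$ controls the required $\ell^2$ quantities, applying Cauchy--Schwarz in each region yields $\|fg\|_t\leq C_{r,t}\|f\|_r\|g\|_t$. The threshold $r>1/2$ is exactly what makes the weighted sum converge.

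For part (i), I would rewrite the commutator on the Fourier side as a multiplier with symbol
\[
\sigma(k,j)=\langle k\rangle^{t}-\langle j\rangle^{t},\qquad \widehat{[\Lambda^{t},M_g]h}(k)=\sum_{j}\sigma(k,j)\,\hat g(k-j)\,\hat h(j).
\]
The mean-value theorem applied to $s\mapsto s^{t/2}$ (or $s\mapsto (1+s)^{t/2}$) furnishes the pointwise bound
\[
|\sigma(k,j)|\leq C_{t}\,|k-j|\,\bigl(\langle k\rangle^{t-1}+\langle j\rangle^{t-1}\bigr),
\]
valid for $t\leq r+1$. The factor $|k-j|\leq\langle k-j\rangle$ is absorbed into $\hat g$ and costs exactly one derivative, forcing $g\in H^{r+1}$ rather than $H^{r}$. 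Substituting this estimate inside $\|\cdot\|_0$, the problem reduces to two bilinear expressions in $g$ and $h$ of the same shape as the one handled in part (ii), now with effective parameters shifted by one. The constraint $-1/2<t\leq r+1$ appears precisely as the range in which the Peetre splitting in these shifted expressions remains admissible and the high-low/low-high trichotomy still yields convergent weighted sums; applying part (ii) (or its proof) bounds each piece by $C_{r,t}\|g\|_{r+1}\|h\|_{t-1}$.

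The main obstacle is the uniform treatment across the full range of signed $t$: naive estimates lose sharpness when $t<0$ because the derivative $s^{t/2-1}$ blows up at low frequencies, and one must be careful that the Peetre constants do not degenerate at the endpoints $t=-1/2$ and $t=r+1$. I would handle this by proving the symbol bound in two cases, $|k-j|\leq \tfrac12\min(\langle k\rangle,\langle j\rangle)$ (where the mean value theorem is applied on a segment on which $\langle\cdot\rangle$ is comparable at both endpoints) and its complement (where $\sigma$ is simply bounded by the sum $\langle k\rangle^{t}+\langle j\rangle^{t}$ and $|k-j|$ is comparable to $\max(\langle k\rangle,\langle j\rangle)$). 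Assembling both cases and invoking Young's inequality with the summability provided by $r>1/2$ closes the proof without any restriction beyond the stated ranges of $t$.
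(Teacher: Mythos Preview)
Your sketch is essentially correct in its outline: passing to the Fourier side, invoking the Peetre inequality for the product estimate, and using a mean-value bound on the symbol $\langle k\rangle^{t}-\langle j\rangle^{t}$ for the commutator is precisely the standard route to these Kato--Ponce type inequalities, and the case splitting you describe for negative $t$ is the right way to avoid the low-frequency blow-up of $s^{t/2-1}$.

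However, you should be aware that the paper does \emph{not} prove this lemma at all. It is stated with explicit attribution: part (i) is quoted from Lannes, \emph{The Water Waves Problem}, Proposition~B.10(2), and part (ii) from Kato's 1975 lecture-notes paper, Lemma~A1. The only additional remark the authors make is that, although those references work on $\mathbb{R}^m$, the same estimates carry over to the periodic setting. So your write-up is not a reconstruction of the paper's proof but an independent (and more self-contained) argument where the paper simply imports the result from the literature. If your goal is fidelity to the paper, the appropriate ``proof'' is a citation plus the one-line observation that the Fourier-side argument is unchanged when sums over $\mathbb{Z}$ replace integrals over $\mathbb{R}$; if your goal is to supply what the paper omits, then your sketch is a reasonable expansion of exactly the arguments found in those references.
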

Even though Lemma \ref{L_comm} is stated on the real line (in general $\R^m$), it holds on periodic domain as well.

Now, we define a bounded linear operator and prove assumption (A3):
\begin{lemma}
For any $u\in H^{s}$ there exists a bounded linear operator $B(u) \in \mathcal L(H^{s-1})$ satisfying $B(u) = \Lambda A(u) \Lambda^{-1} - A(u)$ and $B \colon H^{s} \to \mathcal L(H^{s-1})$ is uniformly bounded on bounded sets in $H^{s}$. Furthermore for all $r>0$ there exists a constant $C_2$ which depends only on $r$ such that  
\begin{equation}\label{lem_B_estimate}
\| B(u) - B(v)\|_{\mathcal L(H^{s-1})} \leq C_2 \, \|u-v\|_{s}
\end{equation}
for all $u,v \in \mathrm B_r(0)\subseteq H^{s-1}$. Here, $A(u)$ is the operator given by \eqref{ql}.
\end{lemma}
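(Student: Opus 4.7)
The plan is to obtain an explicit formula for $B(u)$ in terms of a commutator and then reduce both the boundedness and the Lipschitz continuity to the commutator estimate in Lemma \ref{L_comm}(i). Since $\partial_x$ and $\Lambda^{-1}$ are both Fourier multipliers, they commute, so a direct computation on $w\in H^{s-1}$ gives
$$
B(u)w = \Lambda(u\,\partial_x \Lambda^{-1} w) - u\,\partial_x w = [\Lambda,M_u]\,\Lambda^{-1}\partial_x w.
$$
Note also that the Fourier symbol of $\Lambda^{-1}\partial_x$ is $ik/(1+k^2)^{1/2}$, which has modulus $\leq 1$, so $\Lambda^{-1}\partial_x$ is bounded from $H^{s-1}$ to $H^{s-1}$ with norm $\leq 1$.

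The first step is then to show $\|[\Lambda,M_u]h\|_{s-1}\leq C\|u\|_s\|h\|_{s-1}$ for $u\in H^s$ and $h\in H^{s-1}$. For this I would use the algebraic identity
$$
\Lambda^{s-1}[\Lambda,M_u] = [\Lambda^s,M_u] - [\Lambda^{s-1},M_u]\,\Lambda,
$$
(which is easily verified by expanding both sides) and then apply Lemma \ref{L_comm}(i) twice with $r=s-1$: once with $t=s$ (giving $\|[\Lambda^s,M_u]h\|_0\leq C\|u\|_s\|h\|_{s-1}$) and once with $t=s-1$ (giving $\|[\Lambda^{s-1},M_u]\Lambda h\|_0\leq C\|u\|_s\|\Lambda h\|_{s-2}\leq C\|u\|_s\|h\|_{s-1}$). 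Both uses satisfy the hypothesis $-1/2<t\leq r+1=s$ because $s>3/2$.

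Combining these two steps, setting $h=\Lambda^{-1}\partial_x w$, gives
$$
\|B(u)w\|_{s-1}\leq C\|u\|_s\|\Lambda^{-1}\partial_x w\|_{s-1}\leq C\|u\|_s\|w\|_{s-1},
$$
which establishes $B(u)\in\mathcal L(H^{s-1})$ and the uniform bound on balls of $H^s$. The Lipschitz estimate \eqref{lem_B_estimate} then comes for free from the linearity of $u\mapsto[\Lambda,M_u]$: since $B(u)-B(v)=[\Lambda,M_{u-v}]\Lambda^{-1}\partial_x$, the same calculation applied to $u-v$ yields
$$
\|(B(u)-B(v))w\|_{s-1}\leq C\|u-v\|_s\|w\|_{s-1},
$$
from which $\|B(u)-B(v)\|_{\mathcal L(H^{s-1})}\leq C_2\|u-v\|_s$ is immediate, with $C_2$ depending only on the radius $r$ (indeed, the bound above is even uniform in $u,v$).

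The only mildly tricky step is the algebraic splitting of $\Lambda^{s-1}[\Lambda,M_u]$ to put the estimate in a form where Lemma \ref{L_comm}(i) can be applied with an index choice that respects its range $-1/2<t\leq r+1$; once that rewriting is in hand, everything else is mechanical. I also note that the hypothesis as written has $u,v\in B_r(0)\subseteq H^{s-1}$, but the estimates only make sense (and the $C_2$ depending only on $r$ is only justified) when the ball is taken in $H^s$, so I would interpret the statement accordingly.
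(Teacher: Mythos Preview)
Your proof is correct and essentially identical to the paper's. The paper writes $\Lambda^{s-1}B(u)=[\Lambda^{s},u]\Lambda^{-1}\partial_x+[u,\Lambda^{s-1}]\partial_x$ and bounds each piece with Lemma~\ref{L_comm}; your identity $\Lambda^{s-1}[\Lambda,M_u]=[\Lambda^{s},M_u]-[\Lambda^{s-1},M_u]\Lambda$, applied to $h=\Lambda^{-1}\partial_x w$, yields exactly the same two terms, and you invoke the same commutator estimate with the same parameter choices $t=s,\,s-1$ and $r=s-1$. Your remark that the constant is in fact independent of the radius (by linearity of $u\mapsto B(u)$) and that the ball should be taken in $H^s$ rather than $H^{s-1}$ are both apt.
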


\begin{proof}
Let $u\in H^s$. Since $\partial_x$ commutes with $\Lambda$ and $\Lambda^{-1}$ we obtain that
\begin{equation*}
  B(u)=\Lambda u\partial_x\Lambda^{-1}-u\partial_x
=[\Lambda,u\partial_x]\Lambda^{-1}
=[\Lambda,u]\Lambda^{-1}\partial_x.  
\end{equation*}
Hence we can write $\Lambda^{s-1} B(u)$ as
\begin{align*}
\Lambda^{s-1}[\Lambda,u]\Lambda^{-1}\partial_x&=\Lambda^{s}u\Lambda^{-1}\partial_x-\Lambda^{s-1}u\partial_x=[\Lambda^{s},u]\Lambda^{-1}\partial_x+u\Lambda^{s-1}\partial_x-\Lambda^{s-1}u\partial_x\\
&=[\Lambda^{s},u]\Lambda^{-1}\partial_x +[u,\Lambda^{s-1}]\partial_x.
\end{align*}
Let now $\omega\in H^{s-1}$ and $u,v \in H^s$ be arbitrary. In view of the above identity and Lemma \ref{L_comm}, we obtain the following estimate
\begin{align*}
&\|(B(u)-B(v))\omega\|_{s-1} = \|\Lambda^{s-1}(B(u)-B(v))\omega\|_0 \\
&\leq\|[\Lambda^{s},u-v]\Lambda^{-1}\partial_x\omega\|_0
+\|[u-v,\Lambda^{s-1}]\partial_x\omega\|_0\\
&\leq C \|u-v\|_s(\|\Lambda^{-1}\partial_x\omega\|_{s-1}+\|\partial_x\omega\|_{s-2})\leq C \|u-v\|_s\|\omega\|_{s-1},
\end{align*}
where $C$ is a generic constant independent of $u,w$ and $w$. In particular, this shows that $B(u)$ extends to a bounded linear operator on $H^{s-1}$ for every $u\in H^s$ such that $B\colon H^s \to \mathcal L(H^{s-1})$ is uniformly bounded on bounded sets in $H^s$. Furthermore, this estimation proves that there exists a constant $C_2$ depending only on the radius of the ball $\mathrm B_r(0) \subseteq H^s$ such that \eqref{lem_B_estimate} is satisfied for all $u,v\in B_r(0)$. 
\end{proof}

The last assumption (A4) is proved in Lemma \ref{A4}:
\begin{lemma}\label{A4} 
For all $t\in[0,\infty)$, $f$ is uniformly bounded on bounded sets in $H^s$. Moreover, the map $f\colon H^{s}\to H^{s}$ is locally $H^{s-1}$-Lipschitz continuous in the sense that for every $r>0$ there exists a constant $C_3>0$, depending only on $r$, such that $\| f(u) - f(v)\|_{s-1} \leq C_3 \, \|u - v\|_{s-1}$ for all $u,v \in \mathrm B_r(0) \subseteq H^{s}$
and locally $H^{s}$-Lipschitz continuous in the sense that for every $r>0$ there exists a constant $C_4>0$, depending only on $r$, such that
$\| f(u) - f(v)\|_{s} \leq C_4 \, \|u - v\|_{s}$ for all $u,v \in \mathrm B_r(0) \subseteq H^{s}$.
\end{lemma}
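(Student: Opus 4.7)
The plan is to exploit two facts: the inverse Helmholtz operator $\Lambda^{-2}$ gains two derivatives in the Sobolev scale, and $H^r(\s)$ is a Banach algebra for $r>1/2$ (Lemma \ref{lemma2.4}). First I split the nonlinearity as $f(u)=\Lambda^{-2}\partial_x(u^2)+\Lambda^{-2}\partial_x^2(u^2)$. Since $\Lambda^{2}=1-\partial_x^{2}$, one has the algebraic identity $\Lambda^{-2}\partial_x^{2}=\Lambda^{-2}-I$, which is a bounded operator on $H^r$ for every $r\in\R$. Moreover, since $\Lambda^s\colon H^{s'}\to H^{s'-s}$ is an isomorphism for all $s,s'\in\R$, the operator $\Lambda^{-2}\partial_x$ is bounded from $H^{r}$ to $H^{r+1}$ for every $r\in\R$.

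Next, given $u,v\in H^s$ with $s>3/2$, I use the factorization $u^{2}-v^{2}=(u-v)(u+v)$ together with Lemma \ref{lemma2.4} (valid here since $s-1>1/2$) to obtain
\[
\|u^{2}-v^{2}\|_{s-1}\leq c\,\|u-v\|_{s-1}\,\|u+v\|_{s-1},\qquad \|u^{2}-v^{2}\|_{s}\leq c\,\|u-v\|_{s}\,\|u+v\|_{s}.
\]
For $u,v\in\mathrm B_r(0)\subseteq H^{s}$ one has $\|u+v\|_{s-1}\leq\|u+v\|_{s}\leq 2r$, so both factors are uniformly controlled.

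Combining these ingredients yields both Lipschitz estimates in a single stroke. For the $H^{s-1}$ estimate, the decomposition of $f$ and the mapping properties of $\Lambda^{-2}\partial_x$ and $\Lambda^{-2}-I$ give
\[
\|f(u)-f(v)\|_{s-1}\leq \|\Lambda^{-2}\partial_x(u^{2}-v^{2})\|_{s-1}+\|(\Lambda^{-2}-I)(u^{2}-v^{2})\|_{s-1}\leq C\,\|u^{2}-v^{2}\|_{s-1},
\]
so the algebra estimate yields $\|f(u)-f(v)\|_{s-1}\leq C_3\,\|u-v\|_{s-1}$ with $C_3=C_3(r)$. The $H^{s}$-Lipschitz estimate follows identically by shifting all indices up by one: $\|f(u)-f(v)\|_{s}\leq C\,\|u^{2}-v^{2}\|_{s}\leq C_4\,\|u-v\|_{s}$. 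The uniform boundedness of $f$ on bounded sets of $H^{s}$ (and its trivial independence from $t$, since $f$ has no explicit $t$-dependence) is then obtained by taking $v=0$ in the $H^{s}$ estimate, noting $f(0)=0$. I do not expect any genuine obstacle here: unlike the proofs of (A1)--(A3), no commutator estimates are required, because the smoothing identity $\Lambda^{-2}\partial_x^{2}=\Lambda^{-2}-I$ together with the Sobolev algebra property handle both terms directly; the only point meriting attention is to keep every product within the regime $r>1/2$ in which Lemma \ref{lemma2.4} is applicable, which is automatic from $s-1>1/2$.
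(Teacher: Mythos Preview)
Your argument is correct, and it is essentially the same in spirit as the paper's (factor $u^2-v^2=(u+v)(u-v)$, use the smoothing of $\Lambda^{-2}$, and set $v=0$ for boundedness), but the implementation differs in one useful way. You first rewrite the problematic second term via $\Lambda^{-2}\partial_x^{2}=\Lambda^{-2}-I$, so that every product only needs to be estimated in $H^{s-1}$, where the Banach--algebra property (Lemma~\ref{lemma2.4}) applies directly since $s-1>1/2$. The paper instead bounds $\|f(u)-f(v)\|_{s-1}$ by $C\|(u^2-v^2)+(u^2-v^2)_x\|_{s-2}$ and then invokes the sharper product estimate of Lemma~\ref{L_comm}(ii), which is genuinely needed there because $s-2$ may fall below $1/2$ and $H^{s-2}$ is no longer an algebra. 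Your route is therefore slightly more elementary (no Kato--Ponce/tame product estimate required), while the paper's route stays closer to the original form of $f$ and reuses the machinery already set up for (A3); both give the same constants depending only on $r$.
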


\begin{proof}
Recall that $ f(u) =  \Lambda^{-2} \partial_x \big( u^2 + (u^2)_x\big).$
Therefore, with the help of Lemma \ref{L_comm}
\begin{align*}
\|f(u)-f(v)\|_{s-1} &\leq C \|(u^2-v^2)+(u^2-v^2)_x\|_{s-2}\\
                    &\leq C \|(u+v)(u-v)+((u+v)(u-v))_x\|_{s-2}\\
            		&\leq C (\|(u+v)\|_{s-2}\|(u-v)\|_{s-1}+\|u+v\|_{s-1}\|u-v\|_{s-1})\\
            		&\leq C_3 \|u-v\|_{s-1}
\end{align*}
where $C_3$ is a constant depending on $\|u\|_{H^{s}}$ and $\|v\|_{H^{s}}$. This proves $H^{s-1}$-Lipschitz continuity.

\noindent
Similar arguments will show that we have the following estimates:
\begin{align}\label{lips}
\|f(u)-f(v)\|_{s}\leq& C_4 \|u-v\|_{s}
\end{align}
where $C_4$ is also a constant depending on $\|u\|_{H^{s}}$ and $\|v\|_{H^{s}}$. Since we choose $u_0\in H^{s}$, this estimate actually corresponds to the proof of continuous dependence on the initial data. Note that boundedness of $f(u)$ on bounded subsets \mbox{$\{u\in H^s: \|u\|_s\leq M\}$} of $H^s$ (for all $M$) can be obtained from (\ref{lips}) by choosing $v=0$. Hence, we get the estimates for (A4).
\end{proof}

\section{Proof of theorem \ref{thm1.3}} \label{sec4}

In order to prove theorem \ref{thm1.3} we need some estimates for $u,\,u_x$ and $u_{xx}$. To this end, we need the next result. 

\begin{theorem}\label{thm4.1}
Assume that $u_0\in H^3(\s)$, $m_0(x):=u_0(x)-u''_{0}(x)$, and let $u$ be the corresponding solution of \eqref{1.0.5}. If $m_0(x)\geq0$, $x\in\s$, then $m(x,t):=u(x,t)-u_{xx}(x,t)$ is non-negative for any $t$ as long as the solution exists, and any $x\in\s$. Moreover, $u$ is also non-negative. In particular, if $m_0>0$, then $u>0$.
\end{theorem}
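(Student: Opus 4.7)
The plan is to pass from (\ref{1.0.5}) to a transport equation for the momentum $m = u - u_{xx}$, propagate signs along characteristics of the advection field $-2u$, and then recover positivity of $u$ from positivity of $m$ via the explicit Green kernel $g$ in (\ref{1.0.6}).

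First I would apply $\Lambda^{2} = 1 - \partial_x^{2}$ to the first equation in (\ref{1.0.5}). Using $\Lambda^{2} u_t = m_t$ and $\Lambda^{2}(uu_x) = uu_x - (uu_x)_{xx}$, this gives
\begin{equation*}
m_t = 2uu_x + (u^{2})_x + (u^{2})_{xx} - 2(uu_x)_{xx}.
\end{equation*}
Expanding $(u^{2})_{xx} = 2u_x^{2} + 2uu_{xx}$ and $(uu_x)_{xx} = 3u_x u_{xx} + u u_{xxx}$, and then substituting $u_{xx} = u - m$ and $u_{xxx} = u_x - m_x$, a direct computation yields the key identity
\begin{equation*}
m_t - 2u\, m_x + 2(u - 3u_x)\,m = 2(u - u_x)^{2}.
\end{equation*}
This is a linear first-order transport equation for $m$ with advection velocity $-2u$, bounded zeroth-order coefficient $2(u - 3u_x)$, and \emph{nonnegative} source $2(u - u_x)^{2}$.

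Next, I introduce the characteristic flow $q(\cdot, x_0)$ defined by $\dot q = -2u(q,t)$ with $q(0, x_0) = x_0$. Since $u_0 \in H^{3}(\s)$ and Theorem \ref{thm1.2} yields a solution in $C^{0}([0,T), H^{3}) \cap C^{1}([0,T), H^{2})$ with $T$ independent of $s$, Lemma \ref{lemma2.3} gives that $u$ and $u_x$ are continuously differentiable in $x$, so the ODE for $q$ produces a flow of $C^{1}$ diffeomorphisms of $\s$ on the maximal interval $[0,T)$. Setting $\mu(t) := m(q(t,x_0),t)$ and $a(t) := 2(u - 3u_x)(q(t,x_0),t)$, the identity above becomes the scalar linear ODE $\dot\mu(t) + a(t)\mu(t) = 2(u-u_x)^{2}(q(t,x_0),t)$, whose integrating-factor formula is
\begin{equation*}
m(q(t, x_0), t) = e^{-\int_0^{t} a(s)\,ds}\, m_0(x_0) + \int_0^{t} e^{-\int_s^{t} a(\tau)\,d\tau}\, 2(u - u_x)^{2}(q(s,x_0),s)\,ds.
\end{equation*}
Both terms are nonnegative when $m_0(x_0) \geq 0$, and the first is strictly positive when $m_0(x_0) > 0$. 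Surjectivity of $x_0 \mapsto q(t, x_0)$ then transfers the sign to all of $\s$.

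Finally, $u = \Lambda^{-2} m = g * m$ with $g$ as in (\ref{1.0.6}); since $\cosh$ is strictly positive, $g > 0$ on $\s$, so the convolution of a nonnegative nontrivial function with a strictly positive kernel is nonnegative everywhere, and strictly positive whenever $m > 0$. The main obstacle is regularity bookkeeping: one must justify the pointwise manipulation producing the $m$-equation and the differentiation of $m$ along the flow at the regularity level $u_0 \in H^{3}$. This is precisely handled by the $s$-uniform existence time in Theorem \ref{thm1.2}, which propagates the $H^{3}$ datum so that $m$ is continuous in $x$ and $C^{1}$ along characteristics; the integrating factor, surjectivity of $q(t,\cdot)$, and positivity of $g$ are then routine.
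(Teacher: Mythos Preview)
Your proposal is correct and follows essentially the same route the paper invokes: the paper omits the argument and simply refers to \cite[Lemma~5.5]{li-na}, whose proof is precisely the characteristics method you carry out---derive the transport equation $m_t-2um_x+2(u-3u_x)m=2(u-u_x)^2$ for the momentum, integrate along the flow $\dot q=-2u(q,t)$, and conclude $u=g\ast m\geq0$ from positivity of the periodic Green kernel. Your identification of the regularity bookkeeping as the only delicate point, and its resolution via the $s$-uniform lifespan in Theorem~\ref{thm1.2} (density of smooth data and continuous dependence), is exactly the standard device used in \cite{li-na}.
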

An analogous result for non-periodic problems was proved in \cite[Lemma 5.5]{li-na}, and following the same steps we get the demonstration for Theorem \ref{thm4.1}. For this reason it is omitted.

There is one more fact regarding the $L^\infty$ norm of $u_x$. We begin by observing that 
$$\int_{\s}u_{xx}dx=0.$$
This fact is enough to guarantee the existence of a point $\xi_t-1\in(0,1)$ such that $u_x(t,\xi_t-1)=0$, for each $t\in(0,T)$,.

\begin{lemma}\label{lem4.1}
If $u_0\in H^3(\s)\cap L^1(\s)$, is such that $m_0\geq 0$, then there exists a constant $K>0$ such that the solution of \eqref{1.0.5} satisfies $\|u_x\|_{L^\infty(\s)}\leq K$.
\end{lemma}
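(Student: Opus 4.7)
\medskip

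\noindent\textbf{Proof plan.}
The natural route is the classical Camassa--Holm strategy: first show that $\int_{\s} m(x,t)\,dx$ is conserved in time, then recover $u_x$ from $m$ via the Green's function of $\Lambda^2=1-\p_x^2$ and bound the resulting convolution uniformly.

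First I would integrate the equation $m_t=\p_x(2+\p_x-\p_x^2)u^2$ over $\s$. Since the right-hand side is an exact $x$-derivative of a $1$-periodic function, one obtains
\bb
\frac{d}{dt}\int_{\s} m(x,t)\,dx=0,
\ee
so that $\int_{\s}m(x,t)\,dx=\int_{\s}m_0(x)\,dx$. On the other hand, periodicity forces $\int_{\s}u_{xx}(x,t)\,dx=0$, hence $\int_{\s}u(x,t)\,dx=\int_{\s}m(x,t)\,dx=\int_{\s}u_0(x)\,dx$. Invoking Theorem \ref{thm4.1} we have $m(x,t)\geq 0$ (and $u\geq0$) for every $t$ in the interval of existence, so the conserved integral is in fact the $L^1$ norm:
\bb
\|m(\cdot,t)\|_{L^1(\s)}=\int_{\s}m_0(x)\,dx=\|u_0\|_{L^1(\s)}.
\ee

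Next I would exploit $u=\Lambda^{-2}m=g\ast m$ with the explicit kernel \eqref{1.0.6}. Differentiating under the (periodic) convolution gives $u_x=g_x\ast m$ in the sense of distributions, which is rigorous because $m(\cdot,t)\in L^1(\s)$ and $g$ is continuous and piecewise $C^\infty$. A direct computation from \eqref{1.0.6} yields
\bb
g_x(x)=\frac{\sinh(x-\lfloor x\rfloor-1/2)}{2\sinh(1/2)}\qquad (x\notin\mathbb{Z}),
\ee
so $|g_x(x)|\leq \tfrac{\sinh(1/2)}{2\sinh(1/2)}=\tfrac{1}{2}$ wherever defined, and therefore, using $m\geq 0$,
\bb
|u_x(x,t)|\leq \int_{\s}|g_x(x-y)|\,m(y,t)\,dy\leq \tfrac{1}{2}\|m(\cdot,t)\|_{L^1(\s)}=\tfrac{1}{2}\|u_0\|_{L^1(\s)},
\ee
which delivers the claimed estimate with $K=\tfrac{1}{2}\|u_0\|_{L^1(\s)}$ (finite since $H^3(\s)\hookrightarrow L^1(\s)$ and $u_0\in H^3(\s)\cap L^1(\s)$ by hypothesis).

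The only subtle points are (i) verifying that the $L^1$-conservation argument is legitimate under the regularity granted by Theorem \ref{thm1.2} --- for $u_0\in H^3(\s)\subset H^s(\s)$ with $s>3/2$, the solution is $C^1$ in $t$ with values in $H^{s-1}(\s)$, which is more than enough to integrate by parts in $x$ and differentiate in $t$ --- and (ii) handling the jump of $g_x$ across integer points when differentiating the convolution. The latter is harmless: the pointwise bound $|g_x|\leq 1/2$ holds a.e., and the distributional identity $u_x=g_x\ast m$ combined with continuity of $u_x$ in $x$ gives the pointwise estimate everywhere. This is the step I expect to require the most care, but no actual obstacle since $m(\cdot,t)\in L^1(\s)$ is guaranteed by the sign condition.
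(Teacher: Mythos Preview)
Your argument is correct, but it follows a different route from the paper's proof. Both arguments begin by showing that $\int_{\s} m(\cdot,t)\,dx$ is conserved (the equation is a conservation law in $m$) and then use Theorem~\ref{thm4.1} to upgrade this to conservation of $\|m(\cdot,t)\|_{L^1(\s)}$. From there the two diverge. The paper exploits periodicity to locate, for each $t$, a point $\xi_t$ with $u_x(\xi_t,t)=0$, and then integrates $m=u-u_{xx}$ from $\xi_t$ to an arbitrary $x$; since $u\geq 0$ the $\int u$ contribution has the right sign and one reads off $|u_x(x,t)|\leq \|m_0\|_{L^1(\s)}$. Your approach instead writes $u_x=g_x\ast m$ and bounds the periodic kernel derivative by $|g_x|\leq 1/2$ a.e., obtaining directly $|u_x(x,t)|\leq \tfrac12\|m_0\|_{L^1(\s)}$.

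Your route is slightly more streamlined---it avoids locating the zero of $u_x$ and the sign discussion of $\int u$---and yields the sharper constant $K=\tfrac12\|m_0\|_{L^1(\s)}$ versus the paper's $K=\|m_0\|_{L^1(\s)}$. The paper's argument, on the other hand, is the classical ``integrate from a critical point'' trick from the Camassa--Holm literature and does not require computing the explicit Green's kernel or worrying about the jump of $g_x$ across integers. Your handling of that jump is fine: $g$ is Lipschitz, hence its distributional derivative coincides with the a.e.\ classical derivative, and $g_x\in L^\infty(\s)$ suffices for the convolution bound.
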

\begin{proof}
Let us first assume that $\|m(\cdot,t)\|_{L^1}(\s)$ is constant for any $t$ as long as the solution exists.
    Assume $m_0$ does not change sign and $m_0\geq 0$. Then,
\begin{eqnarray*}
K_1&=&\|m_0\|_{L^1(\s)}=\int_\s m_0(r)dr=\int_\s m(r,t)dr=\int_{\xi_{t}-1}^{\xi_t}m(r,t)dr \\
&\geq&\int_{\xi_{t}-1}^x (u-u_{xx})(r,t)dr=\int_{\xi_{t}-1}^x u(r,t)dr-u_x(x,t)\geq -u_x(x,t)
\end{eqnarray*}
holds for every $x\in[\xi_{t}-1, \xi_t]$. 
Here, we use Theorem \ref{thm4.1}, which guarantee that $u$ does not change sign, under the assumption that $m_0$ does not change sign. Taking into account the final result, we observe that $u_x$ is bounded from below.

Moreover, 
\begin{eqnarray*}
    K_1=\int_{\xi_{t}-1}^{\xi_t}m(r,t)dr
            \geq\int_x^{\xi_{t}} m(r,t)dr=\int_x^{\xi_{t}} u(r,t)dr+u_x(x,t)\geq u_x(x,t).
\end{eqnarray*}
Hence, $u_x$ is bounded also from above. Therefore, we can conclude that $\|u_x\|_{\infty}$ norm is bounded provided that $m$ does not change sign , i.e. $\|u_x\|_{\infty}\leq K$. The case $m_0\leq0$ is proved in a similar way and, therefore, is omitted.

We now complete the demonstration proving that $\|m(\cdot,t)\|_{L^1(\s)}$ is constant. We begin by noticing that \eqref{1.0.1} is itself a conservation law, in the sense that
$$
\p_t(u-u_{xx})=\p_x\Big((2-\p_x)(1+\p_x)u^2\Big)=\p_x\Big((1-\p_x^2)u^2+u^2)\Big).
$$
Integrating the relation above with respect to $x$ on $\s$, we obtain
$$
\f{d}{dt}\int_\s (u-u_{xx})dx=\Big((1-\p_x^2)u^2+u^2)\Big)\big|_\s=0,
$$
meaning that the $\|m(\cdot,t)\|_{L^1(\s)}=const.$. Since $m_0\in L^1(\s)$, we conclude that $\|m(\cdot,t)\|_{L^1(\s)}=\|m_0\|_{L^1(\s)}.$
\end{proof}

Now we start proving Theorem \ref{thm1.3}:

First, we rewrite the equation (\ref{1.0.5}) in the following form
$$u_t-2uu_x+u^2=\Lambda^{-2}(u^2+(u^2)_x)$$
by using $\Lambda^{-2}(f(u))_{xx}=\Lambda^{-2}f(u)-f(u)$. Calling $f(u)=u^2+(u^2)_x$ and observing that $2uu_x=(u^2)_x$, we get 
\begin{equation}\label{4.0.1}
u_t+(1-\partial_x)u^2=\Lambda^{-2}f(u).
\end{equation}
Now, we will differentiate (\ref{4.0.1}) with respect to $x$, simplify and write
\begin{equation}\label{4.0.2}
u_{tx}+(\partial_x-\partial_x^2)u^2=\Lambda^{-2}f(u)-u^2.
\end{equation}
We continue this process and get the following equations:
\begin{equation}\label{4.0.3}
u_{txx}+(\partial_x^2-\partial_x^3)u^2=\Lambda^{-2}f(u)-f(u),
\end{equation}
\begin{equation}\label{4.0.4}
u_{txxx}+(\partial_x^3-\partial_x^4)u^2=\partial_x\Lambda^{-2}f(u)-\partial_x f(u),
\end{equation}

\begin{equation}\label{4.0.5}
u_{txxxx}+(\partial_x^4-\partial_x^5)u^2=\Lambda^{-2}f(u)-f(u)-\partial_x^2 f(u).
\end{equation}

Moreover, we will multiply (\ref{4.0.1}) by $u$, (\ref{4.0.2}) by $u_x$, (\ref{4.0.3}) by $u_{xx}$, (\ref{4.0.4}) by $u_{xxx}$, (\ref{4.0.5}) by $u_{xxxx}$ and integrate all over $\s$.

Let $I(u)=\int_{\s}(u^2+u_x^2+u_{xx}^2+u_{xxx}^2+u_{xxxx}^2) dx$. Therefore, summing up equations (\ref{4.0.1})-(\ref{4.0.5}) we obtain
\begin{eqnarray*}
    &&\frac{1}{2}\frac{d}{dt}I(u)+\int_{\s}(u(1-\partial_x)u^2+u_x (\partial_x-\partial_x^2)u^2+u_{xx}(\partial_x^2-\partial_x^3)u^2\\
    &&+u_{xxx}(\partial_x^3-\partial_x^4)u^2+u_{xxxx}(\partial_x^4-\partial_x^5)u^2)dx\\
    &&=\int_{\s} (u\Lambda^{-2}f(u)+u_x\Lambda^{-2}f(u)-u^2u_x+u_{xx}(\Lambda^{-2}f(u)-f(u))\\
    &&+u_{xxx}(\partial_x\Lambda^{-2}f(u)-\partial_x f(u))+u_{xxxx}(\Lambda^{-2}f(u)-f(u)-\partial_x^2f(u))dx.
\end{eqnarray*}
Our main aim is to obtain $I(u)$, which is equivalent to $H^4$ norm of $u$, within the equation so that Gronwall's inequality is applicable and we get an upper bound valid for all time. That bound will imply the global existence of solution. 

After integration by parts, we can rewrite the equality in the following form:
\begin{eqnarray*}
    &&\frac{1}{2}\frac{d}{dt}I(u)+\int_{\s}(u^3+2(u^2)_x u_{xx}+(u^2)_{xx} u_{xxx}+2(u^2)_{xxx}u_{xxxx}\\
    &&+(u^2)_{xxxx} u_{xxxx}+(u^2)_{xxxx} u_{xxxxx}+u^2u_{xxxx})dx=\int_{\s}u(\Lambda^{-2}f(u))dx.
\end{eqnarray*}

Since 
\begin{eqnarray*}
    &&(u^2)_x=2uu_x,\quad (u^2)_{xx}= 2u_x^2+2uu_{xx},\quad (u^2)_{xxx}=6u_xu_{xx}+2uu_{xxx},\\
    &&(u^2)_{xxxx}=6u_{xx}^2+8u_xu_{xxx}+2uu_{xxxx},\\
    &&(u^2)_{xxxxx}=20u_{xx}u_{xxx}+10u_{x}u_{xxxx}+2uu_{xxxxx},
\end{eqnarray*}
and integrating by parts once more, the integral becomes
\begin{eqnarray}
    &&\frac{1}{2}\frac{d}{dt}I(u)+\int_{\s}(u^3-2(u^2)_{xx} u_{x}-(u^2)_{xxx} u_{xx}-2(u^2)_{xxxx}u_{xxx}\nonumber\\
    &&-(u^2)_{xxxxx} u_{xxx}-(u^2)_{xxxxx} u_{xxxx}+(u^2)_{xx}u_{xx})dx\nonumber\\
    &=&\frac{1}{2}\frac{d}{dt}I(u)+\int_{\s}(u^3-4u_x^3-4uu_xu_{xx}-4uu_{xx}^2-2uu_{xx}u_{xxx}\nonumber\\
    &&-16u_xu_{xxx}^2-4uu_{xxx}u_{xxxx}-20u_{xx}u_{xxx}^2\nonumber\\
    &&-10u_xu_{xxx}u_{xxxx}-2uu_{xxx}u_{xxxxx}-20u_{xx}u_{xxx}u_{xxxx}-10u_xu_{xxxx}^2\nonumber\\
    &&-2uu_{xxxx}u_{xxxxx})dx=\int_{\s}u(\Lambda^{-2}f(u))dx\label{4.0.6}.
\end{eqnarray}

By Lemma \ref{lem4.1}, we have that $\|u_x\|_{\infty}$ is bounded. Moreover, we can rewrite (\ref{1.0.1}) as
$$u_t-u_{txx}=4uu_x+2u_x^2+2uu_{xx}-6u_xu_{xx}-2uu_{xxx}.$$

Multiplying equation above by $u$, noting that
\begin{eqnarray*}
    2uu_xu_{xx}&=&\partial_x(uu_x^2)-u_x^3, \quad
    u^2u_{xxx}=\partial_x(u^2u_{xx}-uu_x^2)+u_x^3,\\
     u^2u_{xx}&=& \partial_x(u^2u_{x})-2uu_x^2,
\end{eqnarray*}
integrating over $\s$ and using the identities above, we obtain
\begin{eqnarray*}
   \frac{1}{2}\frac{d}{dt}\int_\s (u^2+u_x^2)dx=\int_\s (4u_x^3-2uu_x^2)dx\leq 5\|u_x\|_{\infty} \int_\s (u^2+u_x^2)dx,
\end{eqnarray*}
which implies 
$$\|u\|_1^2\leq \|u_0\|_{1}^2 e^{10\int_0^t\|u_x\|_{\infty}d\tau}
   \leq  \|u_0\|_{1}^2 e^{At}=C_0^2,$$
for some optimal constant $A>0$ since $\|u_x\|_{\infty}$ norm is bounded. This estimate is valid at any finite time, therefore is a global bound for $H^1$ norm of $u$. The reason we provide this inequality is to verify that  $\|u\|_{\infty}$ is also bounded, since
$\|u\|_{\infty} \leq  \|u\|_{1} \leq C_0$ by Sobolev embedding theorem. 

Moreover, we can show that $H^2$ norm of $u$ will be bounded in finite time:
Let $J(u)=\int_{\s}(u^2+u_x^2+u_{xx}^2)dx$. Following the arguments done above for (\ref{4.0.1})-(\ref{4.0.3}) and using integration by parts, 
\begin{eqnarray*}
     \frac{1}{2}\frac{d}{dt}J(u)&+&\int_{\s}(2u^3+(u^2)_{x} u_{x}+2(u^2)_{x} u_{xx}+(u^2)_{xx}u_{xx}+(u^2)_{xx}u_{xxx})dx\\
     &\leq& \int_{\s}2u(\Lambda^{-2}f(u))dx.
\end{eqnarray*}
Since $\|u\|_\infty$ and $\|u_x\|_\infty$ are bounded,
\begin{eqnarray*}
     \frac{1}{2}\frac{d}{dt}J(u)&\leq&2\max{(\|u\|_{\infty}, \|u_x\|_{\infty})}J(u)+2\max{(\|u\|_\infty)}\int_{\s}(\Lambda^{-2}f(u))dx.
\end{eqnarray*}
As it was given in (\ref{1.0.6}), $\Lambda^{-2}f=g*f$. Now, we need to estimate the $L^\infty-$norm of the integrand of the second term in order to obtain a differential inequality and apply Gronwall's inequality. For this purpose, we first provide the following two estimates:
\begin{eqnarray*}
    \|g\|_{2}\leq \frac{1}{2}(\frac{e^2+2e-1}{e^2-2e+1})^{1/2}:=n_2, \quad
     \|g\|_{\infty}\leq \frac{1}{2}(\frac{e+1}{e-1}):=n_{\infty}.
\end{eqnarray*}
Hence, 
\begin{eqnarray*}
    &&\|g*u^2\|_{\infty}\leq  \|g\|_{\infty} \|u^2\|_{1}\leq \|g\|_{\infty}\|u\|_{2}^2\leq n_{\infty} C_0,\\
    &&\|g*(u^2)_x\|_{\infty}\leq\|g\|_{2}\|(u^2)_x\|_{2}\leq \|g\|_{2} \|u^2\|_{1}\leq n_2 C_0^2.
\end{eqnarray*}
These estimates, together with Gronwall's inequality, provide the boundedness of $J(u)$ which is equivalent to $H^2$ norm. Therefore, we will be able to give an upper bound for $\|u_{x}\|_\infty$ norm as well since $\|u_x\|_{\infty}\leq \|u_x\|_1<\infty.$

Proving that $H^3$ norm is bounded in finite time will be the last issue to conclude the proof of Theorem \ref{thm1.3}:\\
Let $K(u)=\int_{\s}(u^2+u_x^2+u_{xx}^2++u_{xxx}^2)dx$. Like we did for $J(u)$, we can evaluate the following inequality:
\begin{eqnarray*}
     \frac{1}{2}\frac{d}{dt}K(u)&+&\int_{\s}(2u^3+(u^2)_{x} u_{x}+2(u^2)_{x} u_{xx}+(u^2)_{xx}u_{xx}\\
     &+&(u^2)_{xx}u_{xxx}+(u^2)_{xxx}u_{xxx}+(u^2)_{xxx}u_{xxxx})dx\\
     &\leq& \int_{\s}(2u(\Lambda^{-2}f(u))+u_x(\Lambda^{-2}f(u))+u_{xxx}\partial_x(\Lambda^{-2}f(u)))dx,
\end{eqnarray*}
and 
\begin{eqnarray*}
    \frac{1}{2}\frac{d}{dt}K(u)&\leq& 2\max{(\|u\|_{\infty}, \|u_x\|_{\infty}})K(u)+\int_{\s}(2u(\Lambda^{-2}(u^2))+2u_x(\Lambda^{-2}(u^2)_x))dx\\
    &\leq& 2\max{(\|u\|_{\infty}, \|u_x\|_{\infty}})K(u)+2\max{(\|u\|_{\infty}, \|u_{x}\|_\infty})\int_{\s}(\Lambda^{-2}f(u))dx.
\end{eqnarray*}

Similar arguments reveal that $K(u)$ is bounded in finite time and hence, $\|u_{xx}\|_\infty\leq \|u_{xx}\|_1<\infty.$

Recalling the equality (\ref{4.0.6}), and Theorem \ref{thm4.1} which guarantees that $u>0$, we evaluate
\begin{eqnarray*}
\frac{1}{2}\frac{d}{dt}I(u)&=&-(\int_{\s}(u^3-4u_x^3-4uu_xu_{xx}-4uu_{xx}^2-2uu_{xx}u_{xxx}\nonumber\\
    &&-16u_xu_{xxx}^2-4uu_{xxx}u_{xxxx}-20u_{xx}u_{xxx}^2\nonumber\\
    &&-10u_xu_{xxx}u_{xxxx}-2uu_{xxx}u_{xxxxx}-20u_{xx}u_{xxx}u_{xxxx}-10u_xu_{xxxx}^2\nonumber\\
    &&-2uu_{xxxx}u_{xxxxx})dx)+\int_{\s}u(\Lambda^{-2}f(u))dx\\
    &\leq&-(\int_{\s}(-u^3-4u_x^3-4uu_xu_{xx}-4uu_{xx}^2-2uu_{xx}u_{xxx}\nonumber\\
    &&-16u_xu_{xxx}^2-4uu_{xxx}u_{xxxx}-20u_{xx}u_{xxx}^2\nonumber\\
    &&-10u_xu_{xxx}u_{xxxx}-2uu_{xxx}u_{xxxxx}-20u_{xx}u_{xxx}u_{xxxx}-10u_xu_{xxxx}^2\nonumber\\
    &&-2uu_{xxxx}u_{xxxxx})dx)+\int_{\s}u(\Lambda^{-2}f(u))dx\\
    &\leq&\max{(\|u\|_{\infty}, \|u_x\|_{\infty}, \|u_{xx}\|_{\infty})}I(u)+\int_{\s}u(\Lambda^{-2}f(u))dx\\
    &\leq& \max{(\|u\|_{\infty}, \|u_x\|_{\infty}, \|u_{xx}\|_{\infty})}I(u)+\max{(\|u\|_{\infty})\int_{\s}}(\Lambda^{-2}f(u))dx.
\end{eqnarray*}

Therefore,
\begin{eqnarray*}
    \frac{1}{2}\frac{d}{dt}I(u)&\leq& K_2I(u)+K_3
\end{eqnarray*}
for some optimal constants $K_2$, $K_3$. Gronwall's inequality implies
$I(u)\leq [I(0)+K_3t]e^{K_2t}$, which is valid for any finite time $0<t\leq T$. Since we find an upper bound for $\|u\|_4$, this completes the proof of Theorem \ref{thm1.3}.

\section{Proof of theorem \ref{thm1.1}}\label{sec5}

The proof of theorem \ref{thm1.1} is divided in three major parts, namely,
\begin{itemize}
\item[{\bf P1}] Existence of $C^1$ periodic one-forms $\omega_1,\omega_2$ and $\omega_3$ satisfying \eqref{1.0.4};
\item[{\bf P2}] Existence of a domain $V$, depending on the initial datum, containing open sets endowed with a PSS structure;
\item[{\bf P3}] Existence of local connection forms $\omega_{13},\omega_{23}$.
\end{itemize}

{\bf $\bullet$ Existence of $C^1$ periodic one-forms $\omega_1$, $\omega_2$ and $\omega_3$.}

Example \ref{example2.1} exhibits two triads of one forms \eqref{2.3.6} satisfying the condition \eqref{1.0.4}. 

For solutions $u$ emanating from an initial datum $u_0\in H^4(\s)$, with $u_0-u_0''>0$, theorem \ref{thm1.2} implies that $u\in C(H^{4}(\mathbb{S}),[0,T))\cap C^1(H^{3}(\mathbb{S}),[0,T))$, whereas Theorem \ref{thm1.3} informs us that $u$ is defined on $U=\R\times(0,\infty)$. Moreover, $u_t(\cdot,t)\in H^3(\s)\subseteq C^2_{\text{per}}(\R)$ and $u(\cdot,t)\in H^4(\s)\subseteq C^3_{\text{per}}(\R)$ in view of the Sobolev Lemma (see lemma \ref{lemma2.3}). Therefore, $u\in C^{3,1}(\R)$ and then $f_{ij}\in C^1(\R)$ and is periodic in the variable $x$, where $f_{ij}$ are the coefficients  of the forms given in \eqref{2.3.6}.

={\bf $\bullet$ Existence of a domain $V$, depending on the initial datum, containing open sets endowed with a PSS structure};

From example \ref{example2.2}, a non-generic solution $u$ of \eqref{1.0.1} can only be periodic if it is constant. Since the initial datum satisfies the condition $u_0-u_0''>0$, by theorem \ref{thm4.1} we know that $u>0$ and it cannot be constant on $U$.

Let us then suppose the existence of an open set $\Omega\subseteq U$ for which $u\big|_\Omega=k$, where $k>0$ is a real number. Without loss of generality, we may assume that $\Omega\subseteq(0,1)\times(0,\infty)=:U_1$ in view of the periodicity of $u$ with respect to $x$. For some $p\in U_1\setminus\Omega$ and $\epsilon>0$, we have $\nabla u(p)=(u_x(p),u_t(p))\neq(0,0)$ and $u\big|_{B_\epsilon(p)}$ is non-constant, where $B_\epsilon(p)$ denotes the disc of centre $p$ and radius $\epsilon$. As a result, $U_1$ has at least one connected component $V$ (and $U$ as well), with $B_\epsilon(p)\subseteq V$, endowed with a PSS structure determined by the forms $\omega_1$ and $\omega_2$.

{\bf $\bullet$ Existence of connection forms defined everywhere $\omega_1\wedge\omega_2\neq0$.}

Henceforth we assume that the open sets under consideration are those that $\nabla u\neq(0,0)$ everywhere. Let us denote these sets generically by $V$.

Our proof is based on, and follows, that made by Castro Silva and Kamran \cite[Proposition 3.7]{tarcisio}.

We have two possible choices for the form $\omega_2$. For this reason, fix one of them and consider the frame $\{\omega_1,\omega_2\}$. Let $a$, $b$ and $c$ functions such that
\bb\label{5.0.4}
\omega_{13}=a\omega_1+b\omega_2,\quad \omega_{23}=b\omega_1+c\omega_2.
\ee

Our task is to find functions locally defined on any open set of $U$ for which \eqref{5.0.4} and the Levi-Civita connection form $\omega_3$ given in \eqref{2.3.6} satisfy \eqref{2.3.1}.

In \cite[Proposition 3.7]{tarcisio} it was shown that the connection forms \eqref{5.0.4} for an equation of the type
\bb\label{5.0.5}
u_t-u_{txx}=\lambda uu_{xxx}+G(u,u_x,u_{xx}),
\ee
satisfy a certain set of differential equations, see \cite[Theorem 2.4]{tarcisio} and also \cite[Theorem 3.4]{keti2015}. The function $G$ has somewhat a specific dependence on its arguments, and also some parameters $\mu$, $m_1$ and $m_2$. In \cite[Theorem 1]{freire-tito-sam} it was shown that \eqref{1.0.1} is a PSS equation, and one of the steps for that demonstration was just to show that it falls in the class considered in \cite[Theorem 3.4]{keti2015}. In particular, the mentioned parameters are $\mu\in\R$, $m_1\in\{-2,1\}$ and $m_2=0$.

Therefore, in view of \cite[Equation (7)]{freire-tito-sam} and \cite[Theorem 3.4]{keti2015}, we fall either into \cite[Proposition 3.7, case ii.)]{tarcisio} or \cite[Proposition 3.7, case iii.)]{tarcisio}, depending on whether $\mu=0$ or $\mu\neq0$.

According to \cite[Equation (181)]{tarcisio}, the functions $a$, $b$ and $c$ in \eqref{5.0.4} take the form (recall that $m_2=0$) $a=\phi_1(z)$, $b=\phi_2(z)$, and $c=\phi_3(z)$, $z=m_1x$, for some real valued and smooth functions $\phi_1$, $\phi_2$ and $\phi_3$ to be determined, satisfying the condition
\bb\label{5.0.6}
\phi_1\phi_3\neq0.
\ee

The Codazzi-Mainardi equations give (see \cite[Equations (182)--(183)]{tarcisio})
\bb\label{5.0.7}
\phi_1'+\mu\phi_2'-\phi_1-2\mu\phi_2+\phi_3=0
\ee
and
\bb\label{5.0.8}
\phi_2'+\mu\phi_3'+\mu\phi_1-2\phi_2-\mu\phi_3=0,
\ee
whereas the Gauss equation reads
\bb\label{5.0.9}
\phi_1\phi_3-\phi_2^2=-1.
\ee

Equation \eqref{5.0.9}, jointly with condition \eqref{5.0.6}, imply that $b\neq0$ everywhere. 

From \eqref{5.0.7} we obtain $\phi_3$ in terms of $\phi_1$, $\phi_2$ and their first derivatives. Substituting the result into \eqref{5.0.8} and integrating once, we obtain (see \cite[Equation (184)]{tarcisio})
\bb\label{5.0.10}
\mu\phi_1'=(1+\mu^2)\phi_2-\mu^2\phi_2'-\be e^{2z},
\ee
where $\be\in\R$ is a constant.

We now divide our proof in two different cases.

{\bf Case $\mu=0$.} From \eqref{5.0.10} we conclude that 
$\phi_2=\be e^{2z},$
whereas \eqref{5.0.7} gives
\bb\label{5.0.11}
\phi_3=\phi_1-\phi_1'.
\ee

Substituting $\phi_2$ and $\phi_3$ into the Gauss equation \eqref{5.0.9} we obtain the following Abel differential equation of the second kind
$$\phi_1\phi_1'=\phi_1^2-\be^2 e^{4z}+1.$$

Under the change $\phi_1=e^zw$, where $w$ is another function of $z$, we obtain the following simpler ODE 
$$ww'=e^{-2z}-\be^2 e^ {2z},$$
that, after solving, substituting back the result for $\phi_1$, and proceeding some manipulation, gives
\bb\label{5.0.12}
\phi_1(z)=\pm e^z\sqrt{\gamma-\be^2 e^{2z}-e^{-2z}}=\pm\sqrt{\gamma e^{2z}-1 -\be^2 e^{4z}}.
\ee

Substituting \eqref{5.0.12} into \eqref{5.0.11} and going back to the original functions $a$, $b$ and $c$, we obtain
\bb\label{5.0.13}
\ba{lcl}
a(x,t)&=&\ds{\pm \sqrt{\gamma e^{2m_1x}-\be^2e^{4m_1x}-1}},\quad b(x,t)=\ds{\be e^{2m_1x}},\\
\\
c(x,t)&=&\pm\ds{\f{\be^2 e^{2m_1x}-1}{ \sqrt{\gamma e^{2m_1x}-\be^2 e^{4m_1x}-1}}}.
\ea
\ee
]{\bf Case $\mu\neq0$.} From \eqref{5.0.10} we can write $\phi_1'$ in terms of $\phi_2$ and $\phi_2'$. Substituting it into \eqref{5.0.9} we obtain
\bb\label{5.0.14}
\phi_3=\phi_1+\phi,\quad
\phi=\ds{\f{\mu^2-1}{\mu}\phi_2-\f{\be}{\mu}e^{2z}.}
\ee

Substituting \eqref{5.0.14} into the Gauss equation \eqref{5.0.9} we obtain
$$\phi_1^2+\phi\phi_1-\phi_2^2=-1,$$
which, after solved for $\phi_1$, yields
\bb\label{5.0.15}
\phi_1=\f{-\phi\pm\sqrt{\Delta}}{2},\quad \Delta=\phi^2-4(1-\phi_2^2),
\ee
which are well defined as long as $\Delta\geq0$.

Substituting \eqref{5.0.15} into \eqref{5.0.14} and the result into \eqref{5.0.10} we obtain the following ODE for $\phi_2$
\bb\label{5.0.16}
[(1+\mu^2)\sqrt{\Delta}\pm(\mu^2-1)\phi\pm4\mu \phi_2]\phi_2' -2(1+\mu^2)\sqrt{\Delta}\phi_2\mp 2\be e^{2z}\phi=0.
\ee

It was shown in \cite[page 36]{tarcisio} that the coefficient of $\phi_2'$ cannot vanish, otherwise we would conclude that $\phi_1=\phi_3$ and $\phi_2=0$, which contradicts \eqref{5.0.9}. By continuity, such a coefficient does not change its sign. Without loss of generality, we may assume it to be positive and the ODE above takes the form $b'=g(z,b)$.

Given a point $p=(x_0,t_0)\in V$, arguing exactly as \cite[page 37]{tarcisio} we conclude that the ODE to $b$ subject to $b(x_0/m_1)=t_0$ has a unique (local) solution, that guarantee the local existence of $b$ in a neighborhood of each point of $V$.\hfill$\square$

\begin{remark}
    The uniqueness of the forms \eqref{2.3.6} follows from \cite[Theorem 3.4]{keti2015}, see also \cite[page 760]{freire-tito-sam}, and the fact that the solution $u$ of \eqref{1.0.1} is unique in view of theorem \ref{thm1.2}. 
\end{remark}

\section{Concluding remarks} \label{sec6}

In this paper we studied an equation whose solutions define metrics for a PSS from the point of view of geometric analysis. More precisely, we used tools of semi-group theory to establish well-posedness of solutions and then study the corresponding surface qualitatively.

From the point of view of analysis, our Theorem \ref{thm1.2} ensures well-posedness of solutions, whereas our Theorem \ref{thm1.3} ensures enough regularity of the solution in order to ensure geometric relevance in the one-forms given in Theorem \ref{thm1.1}. In particular, this last theorem can be seen as a sort of existence and uniqueness theorem for periodic surfaces emanating from a given initial datum, which can be associated to a certain curve in the three-dimensional space.

\section*{Acknowledgements}

N. D. Mutluba\c{s} is supported by the Turkish Academy of Sciences within the framework of the Outstanding Young Scientists Awards Program (T\"{U}BA-GEBIP-2022). I. L. Freire is thankful to Enrique Reyes, Marcio Fabiano da Silva and Stefano Nardulli for stimulating discussions and support. He is also grateful to CNPq (grant nº 310074/2021-5) and FAPESP (grants nº 2020/02055-0 and 2022/00163-6) for financial support. He is also grateful to the Department of Mathematical Sciences for warm hospitality and support during the development of this work.

\end{document}